\newtheorem{theorem}{Theorem}[section]
\newtheorem{lemma}[theorem]{Lemma}
\newtheorem{proposition}[theorem]{Proposition}
\newtheorem{corollary}[theorem]{Corollary}
\newcommand{\proof}{\noindent{\bf Proof.\ }}
\newcommand{\qed}{\hfill $\square$ \bigskip}
\newcommand{\mut}{\mu_{\rm t}}
\newcommand{\Int}{V_\mathrm{Int}}
\newcommand{\Ext}{V_\mathrm{Ext}}
\newcommand{\Hull}{\mathit{hull}}
\definecolor{auburn}{rgb}{0.43, 0.21, 0.1}
\definecolor{blush}{rgb}{0.87, 0.36, 0.51}
\definecolor{mycolor}{rgb}{0.0, 0.6, 0.5}
\begin{document}

\title{Mutual-visibility in strong products of graphs \\ via total mutual-visibility}

\author{
Serafino Cicerone $^{a}$\thanks{Email: \texttt{serafino.cicerone@univaq.it}}
\and
Gabriele Di Stefano $^{a}$\thanks{Email: \texttt{gabriele.distefano@univaq.it}}
\and
Sandi Klav\v zar $^{b,c,d}$\thanks{Email: \texttt{sandi.klavzar@fmf.uni-lj.si}}
\and Ismael G. Yero $^{e}$\thanks{Email: \texttt{ismael.gonzalez@uca.es}}
}

\maketitle

\begin{center}
	$^a$ Department of Information Engineering, Computer Science, and Mathematics,
	     University of L'Aquila, Italy \\
	\medskip

	$^b$ Faculty of Mathematics and Physics, University of Ljubljana, Slovenia\\
	\medskip
	
	$^c$ Institute of Mathematics, Physics and Mechanics, Ljubljana, Slovenia\\
	\medskip
	
	$^d$ Faculty of Natural Sciences and Mathematics, University of Maribor, Slovenia\\
	\medskip
	
	$^e$ Departamento de Matem\'aticas, Universidad de C\'adiz, Algeciras Campus, Spain \\

\end{center}

\begin{abstract}
Let $G$ be a graph and $X\subseteq V(G)$. Then $X$ is a mutual-visibility set if each pair of vertices from $X$ is connected by a geodesic with no internal vertex in $X$. The mutual-visibility number $\mu(G)$ of $G$ is the cardinality of a largest mutual-visibility set. In this paper, the mutual-visibility number of strong product graphs is investigated. As a tool for this, total mutual-visibility sets are introduced. Along the way, basic properties of such sets are presented. The (total) mutual-visibility number of strong products is bounded from below in two ways, and determined exactly for strong grids of arbitrary dimension. Strong prisms are studied separately and a couple of tight bounds for their mutual-visibility number are given.
\end{abstract}

\noindent
{\bf Keywords:} mutual-visibility set; mutual-visibility number; total mutual-visibility set; strong product of graphs

\noindent
AMS Subj.\ Class.\ (2020): 05C12, 05C76

\maketitle

\section{Introduction}

Let $G = (V(G), E(G))$ be a connected and undirected graph, and $X\subseteq V(G)$ a subset of the vertices of $G$. If $x,y\in V(G)$, then we say that $x$ and $y$ are {\em $X$-visible}, if there exists a shortest $x,y$-path whose internal vertices are all not in $X$. $X$ is a \emph{mutual-visibility set} if its vertices are pairwise $X$-visible. The cardinality of a largest mutual-visibility set is the \emph{mutual-visibility number} of $G$, and it is denoted by $\mu(G)$. Each largest mutual-visibility set is also called {\em $\mu$-set} of $G$.

These concepts were introduced by Di Stefano in~\cite{DiStefano-2022}. They were in particular motivated by the significance that mutual-visibility properties play within problems that arise in mobile entity models. Some of the numerous works that deal with such models are~\cite{aljohani-2018a, bhagat-2020, diluna-2017, poudel-2021}. Mutual-visibility sets in graphs are in a way dual to general position sets in graphs, the latter concepts being widely investigated in the last years~\cite{klavzar-2021, manuel-2018, patkos-2020, tian-2021, ullas-2016}.

Among other results, it was proved in~\cite{DiStefano-2022} that the decision problem concerning the mutual-visibility number is NP-complete and the invariant was determined for several classes of graphs including block graphs, grids, and cographs. The research was continued in~\cite{Cicerone-2022+} emphasizing on Cartesian products and graphs $G$ with $\mu(G) = 3$. Interestingly, determining the mutual-visibility number of the Cartesian product of two complete graphs turns out to be equivalent to a case of the celebrated Zarankiewicz's problem which is a long-standing open combinatorial problem. Continuing the investigation of the mutual-visibility in graph products, we investigate in this paper strong products.

In the next section, we introduce the necessary concepts and recall some known results. Then, in Section~\ref{sec:mut}, we introduce total mutual-visibility sets which turned out to be useful for the investigation of mutual-visibility sets in strong products, and give some basic properties of total mutual-visibility sets. In the subsequent section, we first bound from below the (total) mutual-visibility number of strong products. Then we determine the mutual-visibility number for the strong grids of arbitrary dimension which shows the tightness of the lower bound. In addition, we find families of strong product graphs for which the bound is not tight and complete the section with another lower bound. In Section~\ref{sec:strong-prisms} we focus on strong prisms where we give a couple of tight bounds for the mutual-visibility number. We conclude our exposition with several open problems and directions for further investigation.

\section{Preliminaries}\label{sec:preliminaries}

Since two vertices from different components of a graph are not mutually visible, all graphs in the paper are connected unless stated otherwise.

For a natural number $n$, we set $[n] = \{1,\ldots, n\}$. Given a graph $G = (V(G), E(G))$, its order will be denoted by $n(G)$. The distance function $d_G$ on a graph $G$ is the usual shortest-path distance.
The subgraph $G'$ is \emph{convex} if, for every two vertices of $G'$, every shortest path in $G$ between them lies completely in $G'$.  The \emph{convex hull} of $V'\subseteq V(G)$, denoted as $\Hull(V')$, is defined as the smallest convex subgraph containing $V'$. A \emph{universal vertex} is a vertex that is adjacent to all other vertices of the graph.

The {\em degree}, $\deg_G(x)$, of a vertex $x$ is the number of its neighbors. If $X\subseteq V(G)$, then $\overline{X}$ denotes the complement of $X$, that is the set containing all vertices of $G$ not in $X$. Moreover, $G[X]$ denotes the subgraph of $G$ induced by $X$, that is the maximal subgraph of $G$ with vertex set $X$. The subgraph of $G$ induced by $\overline{X}$ is denoted by $G-X$, and by $G-v$ when $X=\{v\}$. Two vertices $u$ and $v$ are \emph{false twins} if $uv\not\in E(G)$ and $N_G(u) = N_G(v)$, where $N_G(x)$ is the open neighborhood of $x$, and are \emph{true twins} if $uv\in E(G)$ and $N_G[u] = N_G[v]$, where $N_G[x]$ is the closed neighborhood of $x$. Vertices are {\em twins} if they are true or false twins. Adding a new vertex to a graph $G$ that is a true/false twin of an existing vertex of $G$ is an operation called \emph{splitting}. Another one-vertex extending operation is that of attaching a \emph{pendant vertex}, which is a vertex connected by a single edge to an existing vertex of the graph.

A graph is a \emph{block graph} if every block (i.e., a maximal 2-connected component) is a clique. Block graphs can be generated by using true twins and pendant vertices. Notice that the connected block graphs are exactly the graphs in which there is a unique induced path connecting every pair of vertices.

A graph is called \emph{cograph} whenever it is obtained by a sequence of splittings starting from $K_1$. From this generative definition, it follows a useful structural property. Let $G$ be a cograph, and let $v_1$ be the starting vertex for a sequence of splitting operations that build $G$. If $G$ is connected, the first operation must be a true twin of $v_1$ (that produces $v_2$ adjacent to $v_1$). Let $V_1 = \{v_1\}$ and $V_2 = \{v_2\}$. Now, for each further vertex $v$ which must be added to build $G$, if $v$ is a twin of a vertex in $V_1$ ($V_2$, respectively), then add it to $V_1$ (to $V_2$, respectively). We obtain that $V(G)$ can be partitioned into $V_1$ and $V_2$ where $v'v''\in E(G)$ for each $v'\in V_1$ and $v''\in V_2$.

Cographs include complete split graphs and complete $k$-partite graphs. A graph is a {\em complete split graph} if it can be partitioned into an independent set and a clique such that every vertex in the independent set is adjacent to every vertex in the clique. A {\em $k$-partite graph} (alias $k$-chromatic graph) is a graph whose vertices are (or can be) partitioned into $k$ different independent sets; hence, a complete $k$-partite graph is a $k$-partite graph in which there is an edge between every pair of vertices from different independent sets.

The {\em strong product} $G\boxtimes H$ of graphs $G$ and $H$ has vertex set $V(G\boxtimes H) = V(G)\times V(H)$, with vertices $(g,h)$ and $(g',h')$ being adjacent in $G\boxtimes H$ if either $gg'\in E(G)$ and $h=h'$, or $g=g'$ and $hh'\in E(H)$, or $gg'\in E(G)$ and $hh'\in E(H)$, see~\cite{Hammack-2011}. A {\em $G$-layer}   through a vertex $(g,h)$ is the subgraph of $G\boxtimes H$ induced by the vertices $\{(g',h):\ g'\in
V(G)\}$. Analogously {\em $H$-layers} are defined.

Finally, we recall the following result which is implicitly used throughout the paper.

\begin{proposition}\label{prop:sp-distance}
{\rm \cite[Proposition 5.4]{Hammack-2011} }
If $(g, h)$ and $(g', h')$ are vertices of a strong product $G\boxtimes H$, then
$$ d_{G\boxtimes H} ((g, h), (g', h')) = \max\{d_G(g, g'), d_H (h, h')\}.$$
\end{proposition}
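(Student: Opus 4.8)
The plan is to prove the two inequalities
$d_{G\boxtimes H}((g,h),(g',h')) \ge \max\{d_G(g,g'), d_H(h,h')\}$
and
$d_{G\boxtimes H}((g,h),(g',h')) \le \max\{d_G(g,g'), d_H(h,h')\}$
separately, writing $m = \max\{d_G(g,g'), d_H(h,h')\}$ throughout.

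For the lower bound I would exploit the coordinate projections. Consider any $(g,h),(g',h')$-path $(g_0,h_0)(g_1,h_1)\cdots(g_\ell,h_\ell)$ in $G\boxtimes H$ of length $\ell$. By the adjacency rule of the strong product, for each consecutive pair either $g_i = g_{i+1}$ or $g_ig_{i+1}\in E(G)$. Suppressing the repetitions among $g_0,g_1,\ldots,g_\ell$ yields a walk in $G$ from $g$ to $g'$ using at most $\ell$ edges, so $d_G(g,g') \le \ell$. The identical argument in the second coordinate gives $d_H(h,h') \le \ell$, whence $\ell \ge m$; taking the path to be a shortest one shows $d_{G\boxtimes H}((g,h),(g',h')) \ge m$.

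For the upper bound I would build an explicit path of length $m$ by advancing in both factors simultaneously. Fix a shortest $g,g'$-path $g = a_0, a_1, \ldots, a_p = g'$ in $G$ (so $p = d_G(g,g')$) and a shortest $h,h'$-path $h = b_0, b_1, \ldots, b_q = h'$ in $H$ (so $q = d_H(h,h')$); without loss of generality $p \le q = m$. For $0 \le i \le q$ set $c_i = a_{\min\{i,p\}}$ and consider the vertex sequence $(c_i, b_i)$. For $i < p$ the consecutive vertices differ by $a_i a_{i+1}\in E(G)$ and $b_i b_{i+1}\in E(H)$, a diagonal edge; for $p \le i < q$ the first coordinate is fixed at $a_p$ while $b_i b_{i+1}\in E(H)$, an $H$-layer edge. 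Hence this is a valid walk of length $q = m$ from $(g,h)$ to $(g',h')$, giving $d_{G\boxtimes H}((g,h),(g',h')) \le m$.

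The two bounds together establish the claimed equality. I expect the only point requiring genuine care to be the lower bound: one must note that the projection of a strong-product path is in general a non-simple, possibly stalling walk rather than a path, and then argue that its number of true edge-steps still bounds the factor distance from below. The upper bound is purely constructive and should present no obstacle beyond verifying that each constructed consecutive pair is indeed adjacent in $G\boxtimes H$, which is immediate from the adjacency rule once $p \le q$ is assumed.
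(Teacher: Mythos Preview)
Your proof is correct and is the standard argument for this classical fact. Note, however, that the paper does not give its own proof of this proposition: it is merely quoted from \cite{Hammack-2011} as a preliminary tool, so there is nothing to compare against beyond observing that your two-inequality argument via coordinate projections and the diagonal-then-layer path is exactly the textbook proof one finds in that reference.
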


\section{Total mutual-visibility}
\label{sec:mut}

The following definition introduces a variation of mutual-visibility. It will be useful to provide bounds on the mutual-visibility number of strong product graphs, although we consider that the concept might be also of independent interest.

If $G$ is a graph and $X\subseteq V(G)$, then $X$ is a {\em total mutual-visibility set} of $G$ if every pair of vertices $x$ and $y$ of $G$ is $X$-visible. The term ``total" comes from observing that if $X$ is a total mutual-visibility set of $G$, then for every pair $x,y\in V(G)$ there exists a shortest $x,y$-path whose internal vertices are all not in $X$. The cardinality of a largest total mutual-visibility set of $G$ is the {\em total mutual-visibility number} of $G$ and is denoted by $\mut(G)$. Notice that there could be graphs $G$ which do not contain total mutual-visibility sets, for such situations we set $\mut(G) = 0$. For the sake of brevity, we say that $X$ is a {\em $\mut(G)$-set} (or $\mut$-set if we are not interested in the graph) if it is a total mutual-visibility set such that $|X| = \mut(G)$.

\medskip
Clearly, every total mutual-visibility set is a mutual-visibility set, hence we have the following inequality
\begin{equation}\label{eq:mut-bounds}
0\le \mut(G) \le \mu(G).
\end{equation}
In the following we show that such bounds can actually be achieved by the total mutual-visibility number. Concerning the lower bound of~\eqref{eq:mut-bounds}, it can be easily checked that $\mut(C_n) = 0$ for $n\ge 5$. The variety of graphs with this property appears to be large as the next result confirms.

\begin{proposition}
\label{prop:cover-with-convex}
Let $G$ be a graph. If $V(G) = \bigcup_{i=1}^k V_i$, where $G[V_i]$ is a convex subgraph of $G$ and $\mut(G[V_i]) = 0$ for each $i\in [k]$, then $\mut(G) = 0$.
\end{proposition}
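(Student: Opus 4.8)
The plan is to reduce the statement to singletons by exploiting a monotonicity property of total mutual-visibility sets, and then to transfer the failure of visibility from each convex piece $G[V_i]$ to $G$ by means of convexity.

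First I would record the key observation that total mutual-visibility sets are closed under taking subsets: if $X$ is a total mutual-visibility set of a graph and $Y\subseteq X$, then for every pair $x,y$ a shortest $x,y$-path whose internal vertices avoid $X$ also avoids $Y$, so $Y$ is a total mutual-visibility set as well. In particular, $\mut(G)=0$ is equivalent to the assertion that no singleton $\{v\}$ is a total mutual-visibility set of $G$: if some $\{v\}$ were, then $\mut(G)\ge 1$; conversely, any nonempty total mutual-visibility set contains such a singleton, so $\mut(G)\ge 1$ forces one to exist. Thus it suffices to show that for every $v\in V(G)$ the set $\{v\}$ fails to be a total mutual-visibility set of $G$.

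With this reduction in hand, I would fix an arbitrary vertex $v\in V(G)$ and choose an index $i$ with $v\in V_i$. Since $\mut(G[V_i])=0$, the same monotonicity argument applied inside $G[V_i]$ shows that $\{v\}$ is not a total mutual-visibility set of $G[V_i]$. Hence there exist $x,y\in V_i$ (with $x\ne v\ne y$) such that $v$ is an internal vertex of \emph{every} shortest $x,y$-path in $G[V_i]$.

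The final step, which is the one that genuinely uses the hypothesis, is to promote this to $G$. Because $G[V_i]$ is convex in $G$, every shortest $x,y$-path in $G$ lies entirely in $V_i$; combined with the resulting equality $d_G(x,y)=d_{G[V_i]}(x,y)$, the shortest $x,y$-paths in $G$ are exactly the shortest $x,y$-paths in $G[V_i]$. Consequently $v$ is internal on every shortest $x,y$-path in $G$, so $\{v\}$ is not a total mutual-visibility set of $G$. As $v$ was arbitrary, no singleton works, and therefore $\mut(G)=0$. The only place requiring care is this convexity step: without it, a shortest $x,y$-path in $G$ could leave $V_i$ and bypass $v$, which is precisely the loophole that convexity closes.
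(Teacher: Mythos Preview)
Your proof is correct and follows essentially the same route as the paper's: pick a vertex of a putative nonempty total mutual-visibility set, locate it in some $V_i$, and use convexity of $G[V_i]$ to derive a contradiction with $\mut(G[V_i])=0$. The paper phrases the convexity step as the single assertion ``$X\cap V_i$ is a total mutual-visibility set of $G[V_i]$'', whereas you unpack this explicitly via the contrapositive and specialize to singletons; this is a difference in level of detail rather than in strategy.
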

\begin{proof}
Suppose on the contrary that $G$ contains a total mutual-visibility set $X$ with $|X| \ge 1$. Select an arbitrary vertex $x\in X$. Then there exists an $i\in [k]$ such that $x\in V_i$. Hence clearly, $|X\cap G[V_i]| \ge 1$. However, since $G[V_i]$ is convex, we get that $X\cap G[V_i]$ is a total mutual-visibility set of  $G[V_i]$, a contradiction to the assumption $\mut(G[V_i]) = 0$.
\qed
\end{proof}

In what follows we show that there also exist graphs $G$ with $\mut(G) = 0$ such that they belong to well-known graph classes and they are not covered by the Proposition~\ref{prop:cover-with-convex}. To this end, recall that a {\em cactus graph} is a graph whose blocks are cycles and/or complete graphs $K_2$. Figure~\ref{fig:cactus} shows four examples of cactus graphs.

\begin{proposition}
\label{prop:cactus}
Let $G$ be a cactus graph. Then $\mut(G)= 0$ if and only if $G$ has minimum degree $2$ and for each cycle $C$ in $G$ with $n(C)\leq 4$ all the vertices in $C$ have degree at least $3$ in $G$.
\end{proposition}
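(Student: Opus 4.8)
The plan is to reduce the statement to a purely local classification of the vertices of $G$. The first observation is that every subset of a total mutual-visibility set is again one, so $\mut(G)\ge 1$ holds precisely when some singleton $\{v\}$ is a total mutual-visibility set. Call a vertex $v$ \emph{blocking} if $\{v\}$ is \emph{not} a total mutual-visibility set, that is, if there are vertices $x,y$ such that every $x,y$-geodesic contains $v$ as an internal vertex. Then $\mut(G)=0$ is equivalent to the assertion that every vertex of $G$ is blocking, and it remains to characterize the blocking vertices of a cactus.

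The technical backbone will be a structural lemma: if $a$ and $b$ lie on a common cycle $C$ of a cactus, then every $a,b$-geodesic lies inside $C$, so that $d_G(a,b)=d_C(a,b)$ and the geodesics between $a$ and $b$ can be counted inside $C$. I would prove this by contradiction. A geodesic leaving $C$ at a vertex $c$ and first returning to $C$ at a vertex $c'$ would, together with an arc of $C$ from $c'$ back to $c$, form a cycle $C''$ meeting $C$ in the two vertices $c,c'$ and containing vertices outside $C$. Since every cycle lies in a single block, $C''$ would lie in a block $B\neq C$; but then $B$ and $C$ would be distinct blocks sharing the two vertices $c,c'$, which is impossible in a cactus. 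Hence $c=c'$, contradicting simplicity of the geodesic, and so the geodesic stays in $C$. This reduces all distance computations near a cycle to computations inside the cycle.

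With the lemma in hand I would run a case analysis driven by the block structure, using that in a cactus a non-cut vertex has degree $1$ (a leaf, lying on a $K_2$-block) or degree $2$ (an interior vertex of a unique cycle-block), while a vertex of a cycle $C$ has degree at least $3$ exactly when it is a cut vertex. A leaf is never internal to any path, hence is \emph{not} blocking. A cut vertex $v$ \emph{is} blocking: choosing neighbours $x,y$ of $v$ in two distinct blocks places $x$ and $y$ in different components of $G-v$, so every $x,y$-path, in particular every geodesic, passes through $v$ internally. Finally, for a degree-$2$ non-cut vertex $v$ with cycle-neighbours $a,b$, the lemma shows that $v$ is internal to a geodesic only through the length-$2$ subpath $a\,v\,b$; when $n(C)\le 4$ this subpath can always be rerouted along the complementary arc of $C$ (of length $1$ if $n(C)=3$, giving a strictly shorter path, or of length $2$ if $n(C)=4$, giving an alternative geodesic), so $v$ is not blocking, whereas when $n(C)\ge 5$ the pair $(a,b)$ has $a\,v\,b$ as its unique geodesic, so $v$ is blocking.

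Combining the cases, every vertex of $G$ is blocking if and only if $G$ has no leaf (equivalently, minimum degree at least $2$) and no degree-$2$ vertex lies on a cycle of length $3$ or $4$. Since every vertex of a cycle already has degree at least $2$, the latter condition says exactly that every vertex of every cycle $C$ with $n(C)\le 4$ has degree at least $3$, which is the stated condition; here the blocking-ness of the remaining (degree at least $3$) vertices of such a cycle is covered by the cut-vertex case, as a cycle vertex has degree at least $3$ precisely when it is a cut vertex. I expect the main obstacle to be the dichotomy for degree-$2$ vertices on short cycles: one must verify that the rerouting for $n(C)\in\{3,4\}$ works for \emph{every} pair $(x,y)$, not merely for the pair of cycle-neighbours, and that no other pair can force $v$ for these cycles. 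Both points rest on the structural lemma, which guarantees that the intersection of a geodesic with $C$ is a single arc and hence can be replaced by the complementary arc whenever the latter is no longer.
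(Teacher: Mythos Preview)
Your proposal is correct and follows essentially the same route as the paper's proof: both arguments reduce $\mut(G)=0$ to the failure of every singleton $\{v\}$ and then classify vertices by whether they are leaves, cut vertices, or degree-$2$ vertices on a cycle. Your version is more explicit---you isolate the downward-closure observation, state and prove the structural lemma that geodesics between two vertices of a cycle stay inside that cycle, and carefully justify the rerouting for $n(C)\in\{3,4\}$---whereas the paper's proof leaves all of these points implicit.
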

\begin{proof}
($\Leftarrow$)
Assume that $G$ does not contain pendant vertices and that for each cycle $C$ of $G$, either $n(C)\le 4$ and each vertex in $C$ has degree at least $3$, or $n(C)\ge 5$.
Suppose now $\mut(G)>0$ and consider any total mutual-visibility set $X$ of $G$ with $|X|\ge 1$ and let $v\in X$.
If $v$ does not belong to any cycle of $G$, since there are no pendant vertices, then $v$ must have at least two neighbors and such neighbors are not $X$-visible, which is not possible. Thus, we may consider $v$ belongs to a cycle $C$. If  $n(C)\ge 5$, then the two neighbors of $v$ belonging to $C$ are not $X$-visible. If $n(C)\le 4$ and each vertex in $C$ has degree at least 3, then again there must exist a pair of neighbors of $v$ which are non $X$-visible, a contradiction again. Hence $\mut(G)=0$ must hold.

($\Rightarrow$)
It can be readily observed that each pendant vertex of $G$ forms a total mutual-visibility set of $G$. Thus, $G$ has minimum degree $2$, since $\mut(G)= 0$. Moreover, if $C$ is a cycle in $G$ such that $n(C)\le 4$ and there exists $v\in V(C)$ with $\deg_G(v)=2$, then the set $\{v\}$ is a total mutual-visibility set of $G$, which is not possible. Therefore, the second claim follows.
\qed
\end{proof}

\begin{figure}[ht!]
\begin{center}
\includegraphics[height=4cm]{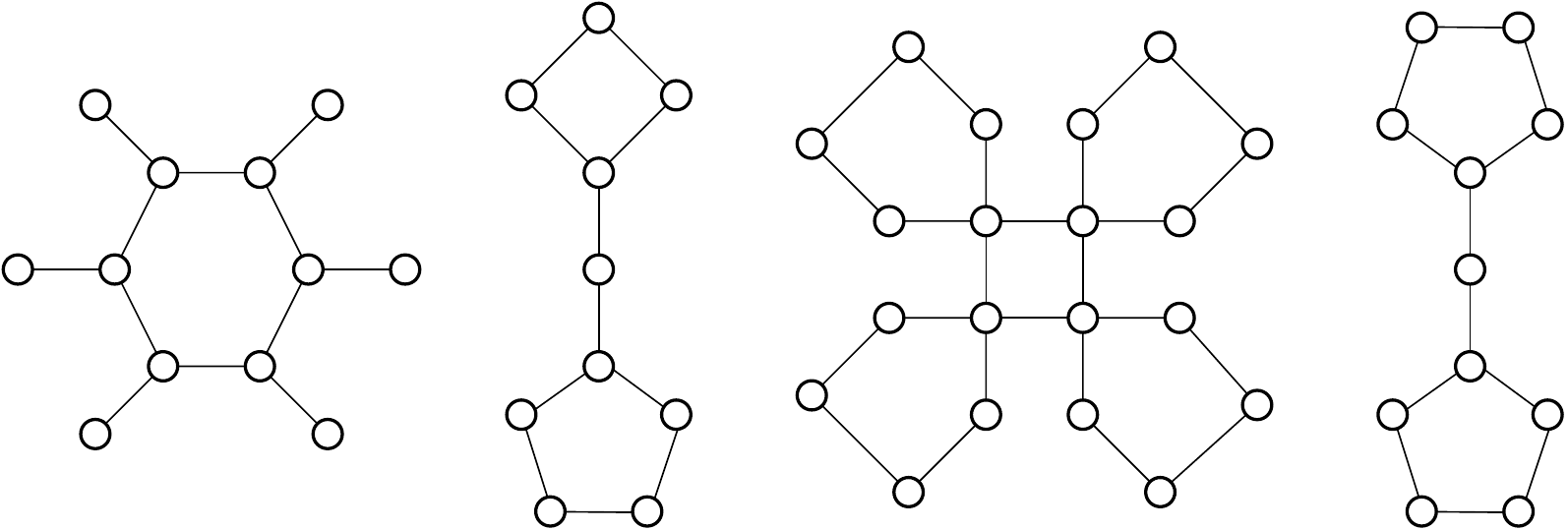}
\caption{\small Some cactus graphs. The first two on the left do not fulfil the conditions of Proposition~\ref{prop:cactus}, and hence their total mutual-visibility number is greater than zero.}
\label{fig:cactus}
\end{center}
\end{figure}

As an application of this lemma, consider Figure~\ref{fig:cactus}. From the left, the first two cactus graphs have total mutual-visibility number greater than zero since they both do not fulfill the conditions of the above lemma. On the contrary, the other two graphs have total mutual-visibility number equal to zero. Moreover, notice that among the cactus graphs it is possible to find infinitely many graphs $G$ with $\mut(G) = 0$ which are not covered by Proposition~\ref{prop:cover-with-convex}. For instance, if $G$ is a cactus graph with minimum degree at least $2$, girth at least $5$, and contains at least one path of length at least $2$ whose edges lie in no cycle, then $\mut(G) = 0$ but $G$ might not admit a proper convex cover as in Proposition~\ref{prop:cover-with-convex}. The rightmost graph in Figure~\ref{fig:cactus} is an example.

\medskip
Concerning the upper bound in~\eqref{eq:mut-bounds}, we introduce the following definition. A graph $G$ is a {\em $(\mu, \mut)$-graph} if $\mu(G) = \mut(G)$.

\begin{proposition}\label{prop:mu-perfect}
Block graphs (and hence trees and complete graphs) and graphs containing a universal vertex are all $(\mu, \mut)$-graphs.
\end{proposition}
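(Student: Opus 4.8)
The common thread is the inequality $\mut(G)\le\mu(G)$ from~\eqref{eq:mut-bounds}: in each case it suffices to establish the reverse inequality $\mu(G)\le\mut(G)$. My plan is in fact to exhibit, in both settings, one explicit set that is simultaneously a largest mutual-visibility set and a total mutual-visibility set.

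For a graph $G$ with a universal vertex $u$, I would first dispose of the trivial case $G=K_{n(G)}$, where every subset is a total mutual-visibility set and $\mu(G)=\mut(G)=n(G)$. If $G$ is not complete, I would show that $X=V(G)\setminus\{u\}$ is a total mutual-visibility set: since $u$ is universal the diameter of $G$ is at most $2$, so any non-adjacent pair $x,y$ has the length-$2$ geodesic $x,u,y$ whose only internal vertex $u$ lies outside $X$, while adjacent pairs are trivially $X$-visible. This gives $\mut(G)\ge n(G)-1$. On the other hand, $V(G)$ itself is not a mutual-visibility set when $G$ is not complete (a pair at distance $2$ admits no geodesic with all internal vertices outside $V(G)$), so $\mu(G)\le n(G)-1$. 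Together with $\mut(G)\le\mu(G)$ this forces $\mut(G)=\mu(G)=n(G)-1$.

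For a block graph $G$ I would first pin down $\mut(G)$. Using the recalled fact that every pair of vertices is joined by a unique induced path, which is therefore its unique geodesic, I would observe that all \emph{internal} vertices of any such geodesic are cut vertices of $G$, since deleting an internal vertex separates the endpoints. Consequently the set $N$ of all non-cut vertices is a total mutual-visibility set, because the internal vertices of every geodesic avoid $N$; conversely, no total mutual-visibility set can contain a cut vertex $c$, since two neighbours of $c$ in distinct blocks are non-adjacent and separated exactly by $c$. Hence $\mut(G)=|N|$.

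It remains to prove $\mu(G)\le|N|$, which I expect to be the main obstacle. Let $X$ be any mutual-visibility set and let $S_X$ be the set of cut vertices it contains. Since $|X|=|X\cap N|+|S_X|$ and $|X\cap N|=|N|-|N\setminus X|$, it suffices to build an injection $\psi\colon S_X\to N\setminus X$. The key structural observation is that if a cut vertex $c$ lies in $X$, then all of $X\setminus\{c\}$ lies in a single component of $G-c$, because two $X$-vertices in different components would be separated by $c\in X$ and hence not $X$-visible. I would then set $\psi(c)$ to be a non-cut vertex of $G$ in one of the remaining, $X$-free, components of $G-c$; such a vertex exists since every nonempty branch of a block graph contains a simplicial vertex, and it automatically lies in $N\setminus X$. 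The crux is injectivity: for distinct $c,c'\in S_X$ one has $c'\in X$ inside the distinguished component of $c$ and $c\in X$ inside that of $c'$, so the side of $c$ pointing away from $X$ is separated from $c'$ by $c$, and symmetrically for $c'$; a short betweenness argument in the block tree then shows these two ``far'' regions are disjoint, so the chosen images cannot coincide. This yields $|S_X|\le|N\setminus X|$, hence $|X|\le|N|=\mut(G)$. Taking $X$ to be a $\mu$-set gives $\mu(G)\le\mut(G)$, completing the block-graph case and, in particular, the subcases of trees and complete graphs.
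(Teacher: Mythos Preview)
Your argument is correct. For graphs with a universal vertex you do exactly what the paper does (the paper merely asserts $\mu(G)=\mut(G)=n(G)-1$ without spelling out the two-line verification you give). For block graphs, however, you take a genuinely different route. The paper imports the equality $\mu(G)=|N|$ (with $N$ the set of non-cut vertices) as a black box from~\cite[Theorem~4.2]{DiStefano-2022} and then only has to check that $N$ is a total mutual-visibility set, which it does by noting that every non-cut vertex is adjacent to a cut vertex and that the cut vertices induce a convex subgraph. You instead reprove $\mu(G)\le|N|$ from scratch via the injection $\psi\colon S_X\to N\setminus X$, using that a cut vertex in a mutual-visibility set forces all other $X$-vertices into a single component of its deletion, and that the ``far'' components of two distinct cut vertices in $X$ are disjoint subtrees in the block-cut tree. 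This makes your proof self-contained and independent of the cited result, at the cost of a longer argument; the paper's version is shorter precisely because it leans on the prior computation of $\mu$ for block graphs.
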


\begin{proof}
If $G$ is a complete graph, then $\mu(G) = \mut(G) = n(G)$. If $G$ is not complete and has a universal vertex, then it can be easily observed that $\mu(G) = \mut(G) = n(G)-1$.

Assume that $G$ is a block graph. From~\cite[Theorem 4.2]{DiStefano-2022} we know that if $G$ is a block graph and $X$ the set of its cut-vertices, then $V(G)\setminus X$ is a $\mu$-set of $G$ and hence $\mu(G) = |V(G)\setminus X|$. We now show that $V(G)\setminus X$ is also a $\mut$-set of $G$. To this end, let us first observe that (1) each vertex in $V(G)\setminus X$ is adjacent to a vertex in $X$ and that (2) $G[X]$ is a convex subgraph of $G$. Hence, every $x,y\in V(G)$ are $(V(G)\setminus X)$-visible regardless their membership to $V(G)\setminus X$. This proves that $V(G)\setminus X$ is also a $\mut$-set of $G$. \qed
\end{proof}

In the following, we characterize those cographs which are $(\mu, \mut)$-graphs. To this aim, we first recall a result from~\cite{DiStefano-2022}.

\begin{lemma}\label{lem:enabling}
{\rm \cite[Lemma 4.8]{DiStefano-2022} }
Given a graph $G$, then $\mu(G)\ge n(G)-1$ if and only if there exists a vertex $v$ in $G$ adjacent to each vertex $u$ in $G-v$ such that $\deg_{G-v} (u) < n(G) - 2$.
\end{lemma}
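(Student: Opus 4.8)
The plan is to reduce the whole statement to understanding when deleting a single vertex $v$ leaves a mutual-visibility set. First I would record the elementary fact that mutual-visibility sets are closed under taking subsets: if $S$ is a mutual-visibility set and $S'\subseteq S$, then any $x,y\in S'$ are joined by a shortest path whose internal vertices avoid $S\supseteq S'$, hence avoid $S'$, so $x,y$ are $S'$-visible. Using this, I would argue that $\mu(G)\ge n(G)-1$ holds if and only if there is a vertex $v$ for which $X=V(G)\setminus\{v\}$ is a mutual-visibility set. One direction is immediate since $|X|=n(G)-1$; for the other I would take a mutual-visibility set of size at least $n(G)-1$ and, if it has size $n(G)$, delete one vertex, which by closure leaves a mutual-visibility set of the form $V(G)\setminus\{v\}$.

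The crux is then to characterize when $X=V(G)\setminus\{v\}$ is a mutual-visibility set. The key structural remark is that $v$ is the \emph{unique} vertex outside $X$. Hence, for a pair $x,y\in X$, a shortest $x,y$-path can have all internal vertices outside $X$ only if every internal vertex equals $v$; since a shortest path is a simple path, this forces the path to be either the edge $xy$ (when $x,y$ are adjacent) or the length-two path $x\,v\,y$. I would therefore isolate the clean criterion: $X$ is a mutual-visibility set if and only if every non-adjacent pair $x,y\in X$ satisfies $\{x,y\}\subseteq N_G(v)$; note that a common neighbor $v$ of two non-adjacent vertices automatically realizes $d_G(x,y)=2$, so the path $x\,v\,y$ is indeed a shortest path.

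It remains to translate this criterion into the stated degree condition. Since $G-v$ has $n(G)-1$ vertices, the maximum possible degree there is $n(G)-2$, so $\deg_{G-v}(u)<n(G)-2$ means exactly that $u$ has a non-neighbor in $G-v$, i.e.\ $u$ is not universal in $G-v$. I would then show the criterion above is equivalent to requiring that every non-neighbor of $v$ be universal in $G-v$. For one implication, if some non-neighbor $u$ of $v$ were not universal in $G-v$, it would have a non-neighbor $w\in V(G)\setminus\{v\}$; then $u,w\in X$ form a non-adjacent pair with $v\notin N_G(u)$, violating the criterion. For the converse, any non-adjacent pair $x,y\in X$ are non-adjacent in $G-v$ as well, so neither is universal in $G-v$, whence both are neighbors of $v$ and the criterion holds. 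Finally, ``every non-neighbor of $v$ is universal in $G-v$'' is the contrapositive of ``$v$ is adjacent to every vertex that is not universal in $G-v$'', which is precisely ``$v$ is adjacent to each $u$ in $G-v$ with $\deg_{G-v}(u)<n(G)-2$''. The only step requiring genuine care is the unique-detour observation in the middle paragraph; once that is in place, the remainder is bookkeeping about neighborhoods and degrees.
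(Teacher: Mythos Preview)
The paper does not prove this lemma; it is quoted verbatim from~\cite{DiStefano-2022} and used as a black box. There is therefore no ``paper's proof'' to compare against.

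That said, your argument is correct. The reduction to ``$V(G)\setminus\{v\}$ is a mutual-visibility set'' via downward closure is sound, and the key observation that with only one vertex outside $X$ any admissible shortest path between non-adjacent $x,y\in X$ must be exactly $x\,v\,y$ is the heart of the matter. Your reformulation of the resulting criterion as ``every non-neighbor of $v$ is universal in $G-v$'' and then its contrapositive in terms of the degree bound $\deg_{G-v}(u)<n(G)-2$ is clean and accurate.
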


In what follows, any vertex $v$ of $G$ fulfilling the condition in the above lemma will be called \emph{enabling}.

\begin{proposition}\label{prop:mu-perfect-cographs}
A cograph $G$ is a $(\mu, \mut)$-graph if and only if it has a universal vertex or no enabling vertices.
\end{proposition}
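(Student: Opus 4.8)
The plan is to prove both implications by comparing $\mu(G)$ and $\mut(G)$ against the single threshold $n(G)-2$, using two quantitative facts together with Lemma~\ref{lem:enabling}. The first fact is a general upper bound: if $G$ has no universal vertex, then $\mut(G)\le n(G)-2$. The second fact is specific to connected cographs: every connected cograph on at least two vertices satisfies $\mut(G)\ge n(G)-2$. Since Lemma~\ref{lem:enabling} translates the existence of an enabling vertex into $\mu(G)\ge n(G)-1$, these two bounds pin both invariants down exactly.

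For the first fact I would show that $\mut(G)\ge n(G)-1$ forces a universal vertex. If a total mutual-visibility set $X$ has $|X|=n(G)$, then $X=V(G)$, and total visibility of every pair forces every pair to be adjacent, so $G$ is complete and has universal vertices. If $|X|=n(G)-1$, let $w$ be the unique vertex outside $X$; for any $y$ at distance at least $2$ from $w$ a shortest $w,y$-path would need an internal vertex outside $X$, but $w$ is the only such vertex and it is an endpoint, a contradiction, so $w$ is adjacent to every other vertex and is universal. For the second fact I would invoke the structural property of connected cographs recalled in Section~\ref{sec:preliminaries}: $V(G)=V_1\cup V_2$ with every vertex of $V_1$ adjacent to every vertex of $V_2$. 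Choosing $p\in V_1$ and $q\in V_2$ and setting $X=V(G)\setminus\{p,q\}$ gives a total mutual-visibility set, because two vertices at distance $2$ necessarily lie in the same part $V_i$, and then the vertex of the opposite part among $\{p,q\}$ is a common neighbor lying outside $X$. Hence $\mut(G)\ge n(G)-2$.

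With these in hand the two directions are immediate. For the forward implication I argue by contraposition: if $G$ has an enabling vertex but no universal vertex, then Lemma~\ref{lem:enabling} gives $\mu(G)\ge n(G)-1$ while the first fact gives $\mut(G)\le n(G)-2$, so $\mut(G)<\mu(G)$ and $G$ is not a $(\mu,\mut)$-graph. For the backward implication, if $G$ has a universal vertex then Proposition~\ref{prop:mu-perfect} already yields $\mu(G)=\mut(G)$; otherwise the hypothesis forces $G$ to have no enabling vertex, so Lemma~\ref{lem:enabling} gives $\mu(G)\le n(G)-2$, and combining this with the second fact $\mut(G)\ge n(G)-2$ and the trivial bound $\mut(G)\le\mu(G)$ yields $n(G)-2\le\mut(G)\le\mu(G)\le n(G)-2$, whence $\mu(G)=\mut(G)$. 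The degenerate case $n(G)\le 1$ is absorbed into the universal-vertex branch, since the lone vertex is vacuously universal.

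The step carrying the real content is establishing the two bounds; the delicate conceptual point is that one must \emph{not} try to prove that an arbitrary $\mu$-set is already total. Indeed in $K_{3,3}$ the $\mu$-set consisting of one entire part together with a single vertex of the other part fails to be a total mutual-visibility set, so the lower bound $\mut(G)\ge n(G)-2$ has to be witnessed by the explicitly constructed set $V(G)\setminus\{p,q\}$ rather than by a maximum mutual-visibility set. This is precisely why the argument is organized as a sandwiching of both invariants between $n(G)-2$ and $n(G)-2$ instead of a direct set-to-set comparison.
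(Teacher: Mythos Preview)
Your proof is correct and follows the same overall strategy as the paper: both directions hinge on Lemma~\ref{lem:enabling} to translate ``enabling vertex'' into $\mu(G)\ge n(G)-1$, and both use the join decomposition $V(G)=V_1\cup V_2$ of a connected cograph to exhibit the total mutual-visibility set $V(G)\setminus\{p,q\}$ witnessing $\mut(G)\ge n(G)-2$.

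Where you diverge is in the $(\Rightarrow)$ direction. The paper, assuming an enabling vertex $v$ and no universal vertex, builds an explicit tripartition $A=\{v\}$, $B=N(v)$, $C=V(G)\setminus(A\cup B)$, argues that $G[C]$ is a clique with full adjacency to $B$, uses this to re-derive $\mu(G)=n(G)-1$, and then rules out $\mut(G)=n(G)-1$ by a three-case analysis on the unique vertex left out. Your Fact~1 (``no universal vertex $\Rightarrow$ $\mut(G)\le n(G)-2$'') replaces that entire block with a two-line general observation, not even specific to cographs. This is genuinely tidier, and as a bonus your sandwich in the $(\Leftarrow)$ direction makes the citation of \cite[Theorem~4.11]{DiStefano-2022} for $\mu(G)\ge n(G)-2$ unnecessary: you only need $\mu(G)\le n(G)-2$ from the lemma and $\mut(G)\ge n(G)-2$ from your explicit set. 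The paper's argument, on the other hand, yields a bit more structural information about the enabling-vertex case (the $A,B,C$ picture), which is not needed for the proposition itself but could be useful elsewhere. Your closing remark about $K_{3,3}$ is apt and explains clearly why one cannot just reuse an arbitrary $\mu$-set.
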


\begin{proof}
($\Leftarrow$) If $G$ has a universal vertex, then clearly $\mut(G)=\mu(G)$.
If $G$ has no enabling vertices, then $\mu(G)< n(G)-1$ by Lemma~\ref{lem:enabling}. Since $\mu(G)\geq n(G)-2$ by~\cite[Theorem 4.11]{DiStefano-2022}, we get $\mu(G) = n(G)-2$.  According to the structural property of cographs recalled in Section~\ref{sec:preliminaries}, the vertices of $G$ can be partitioned into two sets $V_1$ and $V_2$ such that each vertex in $V_1$ is adjacent to each vertex of $V_2$. If $v_1$ ($v_2$, respectively) is an arbitrary vertex in $V_1$ ($V_2$, respectively), then it can be easily observed that $X=V(G) \setminus \{v_1,v_2\}$ is a total mutual-visibility set. Hence, $\mut(G)=\mu(G)= n(G)-2$.

($\Rightarrow$) We show that $G$ is not a $(\mu, \mut)$-graph by assuming that $G$ has an enabling vertex $v$ but no universal vertices. In this case, $V(G)$ can be partitioned in three sets: $A=\{v\}$, $B$ the set of neighbors of $v$, and $C$ which contains all the remaining vertices. Notice that $C$ must be not empty otherwise $v$ would be a universal vertex, against the hypothesis.  By definition of enabling vertex, $B$ contains all the vertices $u\in G$ such that $\deg_{G-v}(v) < n(G)-2$. This implies that for each $u\in C$ we have $\deg_{G-v}(u) \ge n(G)-2$. As a consequence, we have that (1) $G[C]$ is a clique, and (2) $bc \in E(G)$ for each $b\in B$ and $c\in C$. Then $B\cup C$ is a mutual-visibility set and hence $\mu(G)\geq n(G)-1$. As $G$ has no universal vertices, $\mu(G)=n(G)-1$.

We now show that $\mut(G)$ cannot be equal to $n(G)-1$. In fact, let $u\in V(G)$ and assume that $X= V(G)\setminus \{u\}$ is a $\mut$-set. Clearly, $u\ne v$ because $v$ is not $X$-visible with vertices in $C$. Vertex $u$ cannot be in $B$ since it is not a universal vertex and so there is a vertex $w \in B$ such that $uw\not \in E(G)$. But then $u$ and $w$ are not $X$-visible. Finally, $u$ cannot be in $C$, because in this case $u$ and $v$ are not $X$-visible.
\qed
\end{proof}

The following result is a straightforward consequence of the characterization provided by Proposition~\ref{prop:mu-perfect-cographs}.

\begin{corollary}
Complete split graphs and complete $k$-partite graphs ($k \geq 2$) with at least three vertices in each partition are $(\mu, \mut)$-graphs.
\end{corollary}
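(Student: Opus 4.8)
The plan is to read off both statements directly from the characterization in Proposition~\ref{prop:mu-perfect-cographs}. Since that proposition applies to cographs, the first thing I would note is that both families qualify: complete split graphs and complete $k$-partite graphs are cographs, as already recorded in Section~\ref{sec:preliminaries}. It then remains, for each family, to verify that the graph either possesses a universal vertex or has no enabling vertex (in the sense of Lemma~\ref{lem:enabling}).

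For a complete split graph with clique $K$ and independent set $I$, I would simply exhibit a universal vertex. Any $k\in K$ is adjacent to every other vertex of $K$ because $K$ is a clique, and to every vertex of $I$ by the definition of a complete split graph; hence $k$ is universal. (The only degenerate case, $K=\emptyset$, forces $G=K_1$ by connectivity, which is trivially a $(\mu,\mut)$-graph.) The ($\Leftarrow$) direction of Proposition~\ref{prop:mu-perfect-cographs} then yields $\mu(G)=\mut(G)$.

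For a complete $k$-partite graph $G$ with parts $V_1,\dots,V_k$, $k\ge 2$, each of size at least $3$, there is no universal vertex, since a vertex $v\in V_i$ fails to be adjacent to the other $|V_i|-1\ge 2$ vertices of its own part; so I would instead show that $G$ has no enabling vertex. Writing $n=n(G)$, the non-neighbours of $v\in V_i$ are exactly the vertices of $V_i\setminus\{v\}$, and for such a vertex $u$ a direct count in $G-v$ gives $\deg_{G-v}(u)=n-|V_i|$. Because $|V_i|\ge 3$, this is at most $n-3<n-2$, so $u$ is a non-neighbour of $v$ with $\deg_{G-v}(u)<n-2$; thus $v$ is not enabling. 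As $v$ was arbitrary, $G$ has no enabling vertices, and Proposition~\ref{prop:mu-perfect-cographs} again gives $\mu(G)=\mut(G)$.

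The argument is essentially a short degree computation, and the only place requiring care — and the precise reason for the hypothesis ``at least three vertices in each partition'' — is the inequality $\deg_{G-v}(u)=n-|V_i|<n-2$: a part of size exactly $2$ would yield $\deg_{G-v}(u)=n-2$, leaving $v$ with no non-neighbour of low enough degree and hence making $v$ enabling, which is exactly the situation the hypothesis excludes.
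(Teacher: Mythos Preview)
Your proof is correct and follows the same approach as the paper, which simply states that the corollary is a straightforward consequence of Proposition~\ref{prop:mu-perfect-cographs}. You have filled in exactly the verification the paper leaves implicit: exhibiting a universal vertex in the clique part of a complete split graph, and computing $\deg_{G-v}(u)=n-|V_i|<n-2$ to rule out enabling vertices in the complete $k$-partite case.
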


Observe that since $\mu(C_n) = 3$ and $\mut(C_n) = 2$ for $n\le 4$, the inequality $\mut(G) \le \mu(G)$ can be strict. Moreover, even if the equality is attained, it can happen that some $\mu$-sets are $\mut$-sets but some are not. For an example consider the graph from Figure~\ref{fig:mu-sets}.

\begin{figure}[ht!]
\begin{center}
\includegraphics[height=1.9cm]{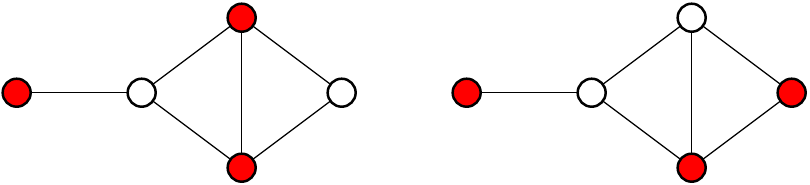}
\caption{\small A graph $G$ with two $\mu$-sets (represented by red vertices). On the right-hand side a $\mu$-set which is also a $\mut$-set is shown, while on the left-hand side the $\mu$-set is not a $\mut$-set (the pair of vertices not in the $\mu$-set are not visible).}
\label{fig:mu-sets}
\end{center}
\end{figure}

\section{Mutual-visibility in strong products}

In this section, we show how the total mutual-visibility of factor graphs can be used to provide lower bounds for the mutual-visibility number of their strong products. To this end, we consider the following refinement of the total mutual-visibility. We say that a total mutual-visibility set $S$ of a graph $G$ is {\em feasible} if any two adjacent vertices $x,y\in S$ have a common neighbor $z\notin S$. Figure~\ref{fig:mu-sets} (right side) shows a graph that admits a feasible $\mut$-set.
Note that there could be graphs having no feasible total mutual-visibility set, say $C_4$. Moreover, if $S$ is a feasible total mutual-visibility set of $G$ and $G$ is not trivial, then it has cardinality at most $|V(G)|-1$. Also, if $S$ is an independent total mutual-visibility set, then it is clearly feasible.

\begin{theorem}\label{thm:mut-lb}
If $S_G$ and $S_H$ are feasible total mutual-visibility sets of the non-trivial graphs $G$ and $H$, respectively, then there exists a feasible total mutual-visibility set $S$ of 
$G\boxtimes H$ such that
$$ |S| \ge |S_G| n(H) + |S_H| n(G) - |S_G| \cdot |S_H|\,.$$
In particular, if both $G$ and $H$ admit feasible $\mut$-sets, then
$$\mut(G\boxtimes H) \ge \mut(G) n(H) + \mut(H) n(G) - \mut(G) \mut(H)\,.$$
\end{theorem}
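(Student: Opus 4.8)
The plan is to exhibit one explicit set and verify its two defining properties. Set
$$S = (S_G\times V(H)) \cup (V(G)\times S_H) = \{(g,h) : g\in S_G \text{ or } h\in S_H\}.$$
Inclusion–exclusion gives $|S| = |S_G|\,n(H) + |S_H|\,n(G) - |S_G|\cdot|S_H|$, so the displayed bound holds with equality and it remains only to prove that $S$ is a \emph{feasible} total mutual-visibility set of $G\boxtimes H$. The ``in particular'' statement then follows at once by taking $S_G$ and $S_H$ to be feasible $\mut$-sets, since the resulting $S$ is a total mutual-visibility set of the stated cardinality.

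Before the verifications I would isolate the one consequence of feasibility used repeatedly: \emph{every vertex of a feasible total mutual-visibility set has a neighbour outside the set}. Indeed, if $x\in S_G$ had all its neighbours in $S_G$, then a neighbour $y$ (existing as $G$ is non-trivial and connected) gives adjacent $x,y\in S_G$, whence feasibility supplies a common neighbour $z\notin S_G$, a neighbour of $x$ outside $S_G$ — a contradiction. Call this property $(\ast)$; it holds for $S_H$ as well. For the total mutual-visibility of $S$, fix arbitrary $(g_1,h_1),(g_2,h_2)$, put $a=d_G(g_1,g_2)$, $b=d_H(h_1,h_2)$, and assume $a\ge b$ (legitimate as $S$ is symmetric in the factors). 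By Proposition~\ref{prop:sp-distance} the product distance is $a$. Choose a $g_1,g_2$-geodesic $u_0\cdots u_a$ with internal vertices avoiding $S_G$ and an $h_1,h_2$-geodesic $w_0\cdots w_b$ avoiding $S_H$ internally. The idea is to build a product geodesic of length $a$ that advances the $G$-coordinate along $u_0\cdots u_a$ at every step — so its internal $G$-coordinates are $u_1,\dots,u_{a-1}\notin S_G$ — while scheduling the $b$ moves of the $H$-coordinate among the $a$ steps. When $b\ge 2$ I perform the first $H$-move at step $1$ and the last at step $a$; then at every internal step the $H$-coordinate is one of $w_1,\dots,w_{b-1}\notin S_H$, so no internal product vertex lies in $S$.

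The main obstacle, and the only place that needs genuine care, is the degenerate regime where the $H$-coordinate is forced onto an endpoint value $h_1$ or $h_2$ lying in $S_H$, i.e.\ $b\in\{0,1\}$ with $a\ge 2$, for then a naive layer geodesic may have \emph{all} internal vertices in $S$. The remedy is a detour using $(\ast)$ and feasibility: pick $h'\notin S_H$ adjacent to both $h_1$ and $h_2$ — from $(\ast)$ when $b=0$ (so $h_1=h_2$) and from feasibility when $b=1$ (so $h_1\sim h_2$, both in $S_H$) — and use
$$(u_0,h_1),(u_1,h'),(u_2,h'),\dots,(u_{a-1},h'),(u_a,h_2),$$
a geodesic of length $a$ whose internal vertices $(u_k,h')$ all avoid $S$. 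If only one endpoint lies in $S_H$ a one-sided schedule already works, and if $a\le 1$ there are no internal vertices. By symmetry this also covers a short or fixed $G$-coordinate, finishing the total mutual-visibility check.

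Finally, feasibility of $S$ is direct. Given adjacent $(g_1,h_1),(g_2,h_2)\in S$, I find $g^\ast\in N[g_1]\cap N[g_2]\setminus S_G$: take $g^\ast=g_i$ if some $g_i\notin S_G$, and otherwise (both $g_i\in S_G$, equal or adjacent) obtain a common closed neighbour outside $S_G$ from $(\ast)$ or from feasibility of $S_G$; choose $h^\ast\in N[h_1]\cap N[h_2]\setminus S_H$ analogously. Since closed neighbourhoods multiply in the strong product, $N[(g_i,h_i)]=N[g_i]\times N[h_i]$, so $(g^\ast,h^\ast)$ is adjacent to or equal to each $(g_i,h_i)$; it lies outside $S$ because $g^\ast\notin S_G$ and $h^\ast\notin S_H$, and it differs from both $(g_i,h_i)$ precisely because those lie in $S$ while $(g^\ast,h^\ast)$ does not. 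Hence $(g^\ast,h^\ast)$ is a common neighbour outside $S$, establishing feasibility.
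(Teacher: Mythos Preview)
Your proof is correct and follows essentially the same approach as the paper: both take $S=(V(G)\times V(H))\setminus(\overline{S_G}\times\overline{S_H})$, verify total mutual-visibility by combining factor geodesics and, in the degenerate short-distance cases, detouring through a neighbour $h'\notin S_H$ (resp.\ $g'\notin S_G$) supplied by feasibility, and then establish feasibility of $S$ by producing $(g^\ast,h^\ast)$ with $g^\ast\notin S_G$, $h^\ast\notin S_H$ in the common closed neighbourhood. Your explicit isolation of property~$(\ast)$ and the use of the closed-neighbourhood product formula $N[(g,h)]=N[g]\times N[h]$ streamline the write-up, but the underlying argument is the same as the paper's.
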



%
\begin{proof}
Let $S = (V(G)\times V(H)) \setminus (\overline{S_G} \times \overline{S_H})$; see Figure~\ref{fig:prod} for an example of the construction of $S$.

\begin{figure}[ht!]
\begin{center}
\includegraphics[height=4.0cm]{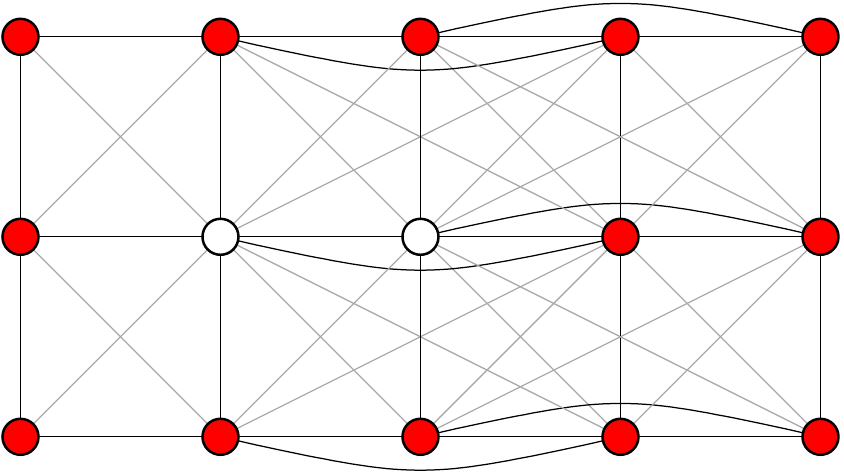}
\caption{\small A representation of $G\boxtimes P_3$, where $G$ is the graph in the right side of Figure~\ref{fig:mu-sets}. The represented $\mut$-set is that defined by Theorem~\ref{thm:mut-lb}.}
\label{fig:prod}
\end{center}
\end{figure}

In the following we first prove that $S$ is a total mutual-visibility set of $G\boxtimes H$, and then we observe that $S$ is feasible. Let $(g,h)$ and $(g',h')$ be arbitrary but distinct vertices from $V(G\boxtimes H)$. Consider first the case in which $d_G(g,g')\ge 2$ and $d_H(h,h')\ge 2$. Regardless the membership of $g,g'$ to $S_G$, since $S_G$ is a total mutual-visibility set of $G$ there exists a shortest $g,g'$-path $P_G$ in $G$ such that no internal vertex of $P_G$ is in $S_G$. Let the consecutive vertices of $P_G$ be $g=g_0, g_1, \ldots, g_k=g'$, with $k\ge 1$ since $d_G(g,g')\ge 2$.
Similarly, there is a shortest $h,h'$-path $P_H$ in $H$ such that no internal vertex of $P_H$ is in $S_H$. Let the consecutive vertices of $P_H$ be $h=h_0, h_1, \ldots, h_\ell=h'$, with $\ell\ge 1$ since $d_H(h,h')\ge 2$.
Assume without loss of generality that $\ell\le k$. Then the vertices
%
%
$$(g,h) = (g_0,h_0), (g_1,h_1), \ldots, (g_{\ell-1}, h_{\ell-1}), (g_{\ell}, h_{\ell-1}), \ldots, (g_{k-1}, h_{\ell-1})(g_k, h_\ell) = (g',h')$$
induce a shortest $(g,h),(g',h')$-path $Q$ in $G\boxtimes H$. Clearly, no internal vertex of $Q$ is in $S$.

If $d_G(g,g')=1$ and $d_H(h,h')=1$, then $(g,g')$ and $(h,h')$ are adjacent and we are done. Thus, we may assume that either $d_G(g,g')\ge 2$ or $d_H(h,h')\ge 2$.

Assume that $d_G(g,g')=1$ and $d_H(h,h')\ge 2$. Consider again the $h,h'$-path $P_H$ defined as above. If $g\notin S_G$, then the vertices
$$(g,h) = (g,h_0), (g,h_1), \ldots, (g, h_{\ell-1}), (g', h_\ell) = (g',h')$$
induce a shortest $(g,h),(g',h')$-path $Q'$ in $G\boxtimes H$, such that no internal vertex of $Q'$ is in $S$. 
On the other hand, if $g\in S_G$ and $g'\in S_G$, then since $S_G$ is a feasible total mutual-visibility set of $G$, the vertices $g,g'$ have a common neighbor $w\notin S_G$. If $g\in S_G$ and $g'\not\in S_G$, then we set $w=g'$. Thus, the vertices
$$(g,h) = (g_0,h_0), (w,h_1), \ldots, (w, h_{\ell-1}), (g', h_\ell) = (g',h')$$
induce a shortest $(g,h),(g',h')$-path $Q''$ in $G\boxtimes H$, such that no internal vertex of $Q''$ is in $S$. Symmetrically, if $d_G(g,g')\ge 2$ and $d_H(h,h')=1$, we obtain analogous conclusions to the ones above.

Consider now the remaining case in which $g=g'$ or $h=h'$ (but not both). By the commutativity of the strong product, we may without loss of generality assume $h= h'$ (and hence $g\neq g'$). Let $g = g_0, g_1, \ldots, g_k = g'$ be a shortest $g,g'$-path in $G$, with $k\ge 1$, such that each internal vertex (if any) is not in $S_G$. If $k=1$, then $(g,h)(g',h')\in E(G\boxtimes H)$ and there is nothing to prove. Assume now $k\ge 2$. 
%
Since  $H$ is non trivial and $S_H$ is feasible, it holds $|S_H| < n(H)$. If $h\not\in S_H$, then 
$$(g,h) = (g_0,h), (g_1,h), (g_2, h), \ldots, (g_{k-1}, h), (g_k,h') = (g',h')$$ trivially shows that $(g,h)$ and $(g',h')$ are mutually visible. Instead, if $h\in S_H$, since $S_H$ is feasible, then there exists a vertex $z\notin S_H$ such that $hz\in E(H)$. 
Consider the path $Q'$ induced by the sequence of vertices
$$(g,h) = (g_0,h), (g_1,z), (g_2, z), \ldots, (g_{k-1}, z), (g_k,h') = (g',h')\,.$$
The length of $Q'$ is $k$, hence $Q'$ is a shortest $(g,h),(g',h')$-path. Moreover, as $z\not\in S_H$ we get that each internal vertex of $Q$ does not belong to $S$. 

According to all the analyzed cases, we have shown that the set $S$ is a total mutual-visibility set. Since
\begin{align*}
|S| & = n(G)n(H) - (n(G) - |S_G|)(n(H) - |S_H|) \\
& = |S_G| n(H) + |S_H| n(G) - |S_G|\cdot |S_H|
\end{align*}
we are done with the first inequality. When $S_G$ and $S_H$ are $\mut$-sets and $G$ and $H$ are non-complete graphs, the second inequality follows directly from the first one.

To conclude the proof, we now show that $S$ is feasible, that is, given two adjacent vertices $(g,h),(g',h')\in S$, we prove that there exists a third vertex not in $S$ which is adjacent to both $(g,h)$ and $(g',h')$. 

Consider first the case when $g\ne g'$ and $h\ne h'$. By the definition of the strong product, $gg'\in E(G)$ and $hh'\in E(H)$. We now claim that there exists a clique $K_G$ of cardinality $2$ or $3$ as a subgraph of $G$ that contains $g$, $g'$, and a vertex $g''$  (which may coincide with $g$ or $g'$) not belonging to $S_G$. If both $g$ and $g'$ lie in $S_G$, then since $S_G$ is feasible, there exists a vertex $g''\not\in S_G$ such that $g$, $g'$, and $g''$ induce a clique of order 3. If $g\not\in S_G$ or $g'\not\in S_G$, then the property trivially holds, since $gg'\in E(G)$ induces a clique of order $2$. We have thus proved that claim, that is, we have a clique $K_G$ (of order 3 or 2) with a vertex $g''$ not in $S_G$. By the commutativity of the strong product, there also exists a clique $K_H$ as a subgraph of $H$ that contains $h$, $h'$, and a vertex $h''$ not belonging to $S_H$ (which may coincide with $h$ or $h'$).
Since the strong product of $K_G$ and $K_H$ induces a clique of $G\boxtimes H$, the vertex $(g'',h'')$ is adjacent to both $(g,h)$ and $(g',h')$. This part of the proof is concluded by observing that $(g'',h'')$ is not in $S$.

The second case to consider is when $g=g'$ or $h=h'$, we may  without loss of generality assume $h=h'$. As in the previous case, there exists a clique $K_G$ of cardinality $2$ or $3$ as a subgraph of $G$ that contains $g$, $g'$, and a vertex $g''\not\in S_G$  (which may coincide with $g$ or $g'$). 
Concerning $H$, since $h=h'$, there exists a clique $K_H$ of cardinality $2$ or $1$ as a subgraph of $H$ that contains $h$ and a vertex $h''$  (which may coincide with $h$) not belonging to $S_H$. 
If $h\not\in S_H$, then $V(K_H)=\{h\}$ and we can set $h''=h$. If $h\in S_H$, then given any neighbor $\bar{h}$ of $h$, either  $\bar{h}\not\in S_H$ (so $V(K_H)=\{h,\bar{h}\}$ and $h''=\bar{h}$) or, since $S_H$ is feasible, there exists $h''\not\in S_H$ adjacent to both $h$ and $\bar{h}$ (so $V(K_H)=\{h,h''\}$). We can again conclude that $(g'',h'')\not\in S$ and $(g'',h'')$ is adjacent to both $(g,h)$ and $(g',h')$.
%
%
\qed
\end{proof}

Of course, when both $G$ and $H$ are $(\mu, \mut)$-graphs that admit feasible $\mut$-sets, the lower bound expressed by Theorem~\ref{thm:mut-lb} can be reformulated as follows:
\begin{equation}\label{eq:mut-lb-perfect}
\mu(G\boxtimes H) \ge \mut(G\boxtimes H) \ge \mu(G)n(H)   + \mu(H)n(G) -  \mu(G) \mu(H)\,.
\end{equation}

Theorem~\ref{thm:mut-lb} extends to an arbitrary number of factors as follows.

\begin{corollary}\label{cor:mut-lb-recursive}
Let $H_k = G_1\boxtimes G_2\boxtimes\cdots \boxtimes G_k$, $k\ge 2$. If $G_i$ is a non-complete graph that admits a feasible $\mut$-set for each $1\le i\le k$, then
$$\mut(H_k) \ge \prod_{i=1}^k n(G_i) - \prod_{i=1}^k (n(G_i) - \mut(G_i)).$$
\end{corollary}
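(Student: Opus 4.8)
The plan is to argue by induction on the number $k$ of factors, using Theorem~\ref{thm:mut-lb} both for the base case and as the engine of the inductive step. The point that makes the induction work is that the \emph{first} part of Theorem~\ref{thm:mut-lb} does more than bound $\mut(G\boxtimes H)$: it manufactures a genuinely \emph{feasible} total mutual-visibility set of $G\boxtimes H$ out of feasible total mutual-visibility sets of the two factors. I would therefore strengthen the statement so as to carry feasibility along the recursion, proving that $H_k$ admits a \emph{feasible} total mutual-visibility set $S_k$ with $|S_k| \ge \prod_{i=1}^k n(G_i) - \prod_{i=1}^k (n(G_i) - \mut(G_i))$; the corollary as stated then follows immediately.

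Before running the induction I would record the elementary identity that rewrites the two-factor bound in product form,
$$|S_G|\, n(H) + |S_H|\, n(G) - |S_G|\cdot|S_H| = n(G)n(H) - (n(G)-|S_G|)(n(H)-|S_H|),$$
so that the target inequality has exactly the shape handled by the recursion. For the base case $k=1$ I would simply take a feasible $\mut$-set $S_1$ of $G_1$, for which $|S_1| = \mut(G_1) = n(G_1) - (n(G_1)-\mut(G_1))$, giving the claimed bound with equality.

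For the inductive step I would assume that $H_{k-1} = G_1\boxtimes\cdots\boxtimes G_{k-1}$ carries a feasible total mutual-visibility set $S_{k-1}$ satisfying the bound for $k-1$. Since each $G_i$ is non-complete it has at least three vertices, so $n(H_{k-1}) = \prod_{i=1}^{k-1} n(G_i) > 1$ and $H_{k-1}$ is non-trivial; moreover $G_k$ is non-trivial and carries a feasible $\mut$-set $S_{G_k}$ with $|S_{G_k}| = \mut(G_k)$. Applying the first part of Theorem~\ref{thm:mut-lb} to $H_k = H_{k-1}\boxtimes G_k$ then yields a feasible total mutual-visibility set $S_k$ which, after the identity above, satisfies
$$|S_k| \ge n(H_{k-1})\,n(G_k) - \bigl(n(H_{k-1})-|S_{k-1}|\bigr)\bigl(n(G_k)-|S_{G_k}|\bigr).$$
Here $n(H_{k-1})n(G_k) = \prod_{i=1}^k n(G_i)$; the induction hypothesis gives $n(H_{k-1})-|S_{k-1}| \le \prod_{i=1}^{k-1}(n(G_i)-\mut(G_i))$; and $n(G_k)-|S_{G_k}| = n(G_k)-\mut(G_k)\ge 0$ because a feasible total mutual-visibility set of a non-trivial graph has cardinality at most $n(G_k)-1$. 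Multiplying these two nonnegative quantities bounds the subtracted term by $\prod_{i=1}^k(n(G_i)-\mut(G_i))$, which closes the induction.

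The main obstacle is a temptation rather than a computation: one would like to iterate the \emph{numerical} second part of Theorem~\ref{thm:mut-lb}, but this fails, because the set returned by the recursion on $H_{k-1}$ need not be a $\mut$-set of $H_{k-1}$ — its size is controlled only from below — and we have no independent guarantee that $H_{k-1}$ even admits a feasible $\mut$-set. The remedy, and the only genuinely delicate point, is to run the induction on the \emph{set} version of the theorem, which accepts an arbitrary feasible total mutual-visibility set as input and returns one as output; the sign condition $n(G_k)-\mut(G_k)\ge 0$, guaranteed by feasibility in the non-trivial graph $G_k$, is exactly what lets the lower bound on $|S_{k-1}|$ propagate correctly through the product term.
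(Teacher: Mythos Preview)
Your proof is correct and follows essentially the same approach as the paper: both argue by induction on $k$, using Theorem~\ref{thm:mut-lb} to propagate a \emph{feasible} total mutual-visibility set (not merely a numerical bound) from $H_{k-1}$ to $H_k$, and then compute the cardinality via the product identity you record. The paper additionally writes down the explicit set $S_k = \prod V(G_i) \setminus \prod \overline{X_i}$ and observes that the construction in the proof of Theorem~\ref{thm:mut-lb} reproduces it at each step, but this is cosmetic; your caution about iterating the set version rather than the numerical second inequality is exactly the point that makes the induction go through.
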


\begin{proof}
For each $1\le i\le k$, let $X_i$ be a feasible $\mut(G_i)$-set. Let
$$S_k = (V(G_1)\times \cdots \times V(G_k)) \setminus
           (\overline{X_1} \times \cdots \times \overline{X_k}).$$
We prove that $S_k$ is a total mutual-visibility set of $H_k$ and proceed by induction on $k$.


By Theorem~\ref{thm:mut-lb} we get that the assertion holds for $k=2$: $S_2$ is a feasible total mutual-visibility set of $H_2$. 
Let us assume it is true for $H_{k}$, $k\ge 2$, and consider $H_{k+1} = H_{k} \boxtimes G_{k+1}$. By the inductive hypothesis, $S_{k}$ is a feasible total mutual-visibility set of $H_{k}$. By the proof of Theorem~\ref{thm:mut-lb}, $S_{k+1}$ is a feasible total mutual-visibility set of $H_{k+1}$. Thus
\[
\begin{array}{rcl}
   \mut(H_{k+1}) & \ge &  n(H_k) n(G_{k+1}) -  (n(H_k) - \mut(H_k))(n(G_{k+1}) - \mut(G_{k+1}))  \\
            & \ge & n(H_k) n(G_{k+1}) - \\
            & & \left( n(H_k) - \left( \prod_{i=1}^{k} n(G_i) - \prod_{i=1}^{k} (n(G_i) - \mut(G_i))\right)\right)(n(G_{k+1}) - \mut(G_{k+1})) \\
& = & n(H_{k+1}) - \left(\prod_{i=1}^k (n(G_i) - \mut(G_i))\right)(n(G_{k+1}) - \mut(G_{k+1})) \\
& = & n(H_{k+1}) - \prod_{i=1}^{k+1} (n(G_i) - \mut(G_i))
\end{array}
\]
and we are done.
\qed
\end{proof}

The following result (cf. Theorem~\ref{thm:path-product-multi}) shows that there are $(\mu, \mut)$-graphs for which the lower bound provided by~\eqref{eq:mut-lb-perfect} coincides with the mutual-visibility number of the strong product. Notice that it concerns the strong product of paths with at least three vertices, whereas Theorem~\ref{thm:P2-block-strong} (cf. Section~\ref{sec:strong-prisms} where strong prisms are considered) will provide the exact value of $\mu(P_2\boxtimes G)$ for every block graph $G$ (and hence also $\mu(P_2\boxtimes P_n)$ with $n\ge 2$).

We first recall the following result that uses convex hulls to provide an upper bound to $\mu(G)$.

\begin{lemma}\label{lem:hull}
{\rm \cite[Lemma 2.3]{DiStefano-2022} }
Given a graph $G$, let $V_1,\ldots,V_k$ be subsets of $V(G)$ such that
$\bigcup_{i=1}^k V_i = V(G)$. Then, $\mu(G) \le \sum_{i=1}^k \mu( \Hull(V_i) )$.
\end{lemma}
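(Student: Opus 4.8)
The plan is to start from a largest mutual-visibility set $X$ of $G$, so that $|X| = \mu(G)$, and to redistribute $X$ among the pieces of the cover by examining each $X \cap \Hull(V_i)$ separately. The central claim I would establish is that for every $i \in [k]$, the set $X \cap \Hull(V_i)$ is a mutual-visibility set of the convex subgraph $\Hull(V_i)$. Granting this claim, we immediately get $|X \cap \Hull(V_i)| \le \mu(\Hull(V_i))$, and the lemma then follows by a union/subadditivity bound, as explained below.

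To prove the claim, I would take two arbitrary vertices $u,v \in X \cap \Hull(V_i)$ and invoke the mutual-visibility of $X$ in $G$: there is a shortest $u,v$-path $P$ in $G$ none of whose internal vertices lie in $X$. The convexity of $\Hull(V_i)$ guarantees that $P$ lies entirely within $\Hull(V_i)$. Consequently, distances inside the subgraph $\Hull(V_i)$ agree with distances in $G$ for this pair, so $P$ is also a geodesic inside $\Hull(V_i)$. Since no internal vertex of $P$ belongs to $X$, none belongs to the smaller set $X \cap \Hull(V_i)$ either, which witnesses that $u$ and $v$ are $(X \cap \Hull(V_i))$-visible within $\Hull(V_i)$. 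As $u,v$ were arbitrary, $X \cap \Hull(V_i)$ is indeed a mutual-visibility set of $\Hull(V_i)$, yielding $|X \cap \Hull(V_i)| \le \mu(\Hull(V_i))$.

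Finally I would assemble the global bound. Because $V_i \subseteq \Hull(V_i)$ for each $i$ and $\bigcup_{i=1}^k V_i = V(G)$, the hulls also cover $V(G)$, so $X = \bigcup_{i=1}^k (X \cap \Hull(V_i))$. Applying subadditivity of cardinality over this (possibly overlapping) union and then the per-piece estimate gives
\[
\mu(G) = |X| = \left| \bigcup_{i=1}^k (X \cap \Hull(V_i)) \right| \le \sum_{i=1}^k |X \cap \Hull(V_i)| \le \sum_{i=1}^k \mu(\Hull(V_i)),
\]
which is exactly the asserted inequality.

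The only delicate point in the whole argument is the convexity step: one must be careful to verify both that the $G$-geodesic $P$ stays inside $\Hull(V_i)$ and that it remains a geodesic relative to the induced metric on $\Hull(V_i)$, so that $X$-visibility in $G$ genuinely transfers to $(X \cap \Hull(V_i))$-visibility in the subgraph. Once this is pinned down, the remainder is routine set manipulation together with the simple fact that a cover of $V(G)$ lifts to a cover by convex hulls.
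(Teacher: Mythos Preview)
Your argument is correct and is the standard one: restrict a $\mu$-set $X$ to each convex hull, use convexity to carry $X$-visibility into the subgraph, and finish with a union bound. Note, however, that the paper does not give its own proof of this lemma; it is merely quoted from \cite[Lemma~2.3]{DiStefano-2022} as a tool, so there is nothing in the present paper to compare your proof against.
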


\begin{theorem}
\label{thm:path-product-multi}
If $H_k = P_{n_1}\boxtimes \cdots \boxtimes P_{n_k}$, where $k\ge 2$ and  $n(P_{n_i})\ge 3$ for $i\in [k]$, then
$$\mu(H_k) = \prod_{i=1}^k n(P_{n_i}) - \prod_{i=1}^k (n(P_{n_i}) - 2).$$
\end{theorem}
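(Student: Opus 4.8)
The plan is to prove matching lower and upper bounds, both equal to $\prod_{i=1}^k n(P_{n_i}) - \prod_{i=1}^k (n(P_{n_i})-2)$. For the lower bound I would first record that each factor $P_{n_i}$ with $n(P_{n_i})\ge 3$ is a $(\mu,\mut)$-graph with $\mu(P_{n_i})=\mut(P_{n_i})=2$: a path is a tree, hence a block graph, so Proposition~\ref{prop:mu-perfect} applies, and the pair of endpoints is a $\mut$-set. Since $n(P_{n_i})\ge 3$ the two endpoints are non-adjacent, so this $\mut$-set is independent and therefore feasible, and $P_{n_i}$ is non-complete. Thus Corollary~\ref{cor:mut-lb-recursive} applies with $\mut(G_i)=2$ and, combined with $\mu\ge\mut$ from~\eqref{eq:mut-bounds}, gives
$$\mu(H_k)\ge \mut(H_k)\ge \prod_{i=1}^k n(P_{n_i}) - \prod_{i=1}^k (n(P_{n_i})-2).$$

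For the upper bound I would cover $V(H_k)$ by its \emph{diagonal lines} in the direction $\mathbf{1}=(1,\ldots,1)$, that is, the maximal sets of the form $\{(p_1+t,\ldots,p_k+t): t\in\mathbb{Z}\}$ intersected with the grid. By Proposition~\ref{prop:sp-distance}, the distance between two points on such a line equals their common coordinate gap $t$, and a shortest path of length $t$ must advance every coordinate by exactly one unit (in the same sense) at each step; this forces the unique geodesic to remain on the line. Hence each diagonal line $V_i$ is convex, and it induces a path $P_{\ell_i}$ (consecutive points are adjacent, points at gap at least two are not). Consequently $\mu(\Hull(V_i))=\mu(P_{\ell_i})\le 2$, with value $1$ exactly for singleton lines, and Lemma~\ref{lem:hull} yields $\mu(H_k)\le \sum_i \mu(\Hull(V_i))$.

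It remains to evaluate this sum, which is the technical heart of the argument. I would parametrize lines by their \emph{lowest} point $p$ (the one with $p-\mathbf{1}$ outside the grid, equivalently $\min_i p_i=1$); such a line has length at least two exactly when $p+\mathbf{1}$ is still in the grid, i.e.\ $p_i\le n(P_{n_i})-1$ for all $i$. The number of lines is then $\prod n(P_{n_i}) - \prod (n(P_{n_i})-1)$ and the number of lines of length at least two is $\prod (n(P_{n_i})-1) - \prod (n(P_{n_i})-2)$. Using that $\mu(P_\ell)$ equals $1$ when $\ell=1$ and $2$ when $\ell\ge 2$, the sum $\sum_i \mu(\Hull(V_i))$ equals the number of lines plus the number of lines of length at least two, and the two middle products telescope to exactly $\prod n(P_{n_i}) - \prod (n(P_{n_i})-2)$. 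Combining with the lower bound finishes the proof.

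The main obstacle is purely this counting: setting up the parametrization of diagonal lines by their lowest points so that their lengths can be summed cleanly. By contrast, the convexity of each line and its path structure follow immediately from the Chebyshev distance formula of Proposition~\ref{prop:sp-distance}, and the lower bound is a direct application of Corollary~\ref{cor:mut-lb-recursive}. (As a sanity check, for $k=2$ the diagonal lines recover the $2\big(n(P_{n_1})+n(P_{n_2})-2\big)$ boundary vertices of the grid.)
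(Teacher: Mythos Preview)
Your proof is correct and follows the same overall strategy as the paper: the lower bound via Corollary~\ref{cor:mut-lb-recursive} (endpoints of each $P_{n_i}$ form an independent, hence feasible, $\mut$-set), and the upper bound via Lemma~\ref{lem:hull} applied to the covering of $H_k$ by convex diagonal paths in the direction $(1,\ldots,1)$. The only difference is in the final bookkeeping: the paper identifies the multiset of diagonal endpoints with the ``boundary'' set $\Ext=\{(\ell_1,\ldots,\ell_k):\exists\,i,\ \ell_i\in\{1,n_{i}\}\}$ and reads off $|\Ext|=\prod n_i-\prod(n_i-2)$, whereas you parametrize diagonals by their lowest point, count $L=\prod n_i-\prod(n_i-1)$ diagonals and $L_{\ge 2}=\prod(n_i-1)-\prod(n_i-2)$ non-singleton ones, and telescope $L+L_{\ge 2}$ to the same value. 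Your counting is arguably cleaner; the paper's version has the small advantage of exhibiting the extremal set $S_k=\Ext$ explicitly as the boundary of the grid.
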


\begin{proof}
Let $X_i\subseteq V(P_{n_i})$ be the (total) mutual-visibility set of $P_{n_i}$ formed by the end-vertices of the path. Note that $X_i$ is feasible. According to the proof of Corollary~\ref{cor:mut-lb-recursive}, we get that
\begin{equation}\label{eq:Sk}
S_k = (V(P_{n_1})\times \cdots \times V(P_{n_k})) \setminus
           (\overline{X_i} \times \cdots \times \overline{X_i})
\end{equation}
is a total mutual-visibility set of $H_k$. By the same corollary, we also get the following lower bound:
$$\mu(H_k) \ge \mut(H_k)
           \ge \prod_{i=1}^k n(P_{n_i}) - \prod_{i=1}^k (n(P_{n_i}) - \mut(P_{n_i}))
             = \prod_{i=1}^k n(P_{n_i}) - \prod_{i=1}^k (n(P_{n_i}) - 2) .$$
Let the tuple $(\ell_1,\ldots,\ell_k)$ denote the generic vertex of $H_k$, where $\ell_i\in [n(P_{n_i})]$, $i\in [k]$. We define the following two subsets of $V(H_k)$:

\begin{itemize}
\item
$\Int = \{ (\ell_1,\ldots,\ell_k):\ \forall~ i\in [k], \ell_i\neq 1 \mbox{ and } \ell_i\neq n(P_{n_i}) \}$;
\item
$\Ext = \{ (\ell_1,\ldots,\ell_k):\ \exists~ i\in [k], \ell_i= 1 \mbox{ or } \ell_i = n(P_{n_i}) \}$.
\end{itemize}
From these definitions, it can be easily observed that $\Int$ and $\Ext$ form a partition of $V(H_k)$. Moreover, according to this notation, we get the following characterization of the total mutual-visibility set $S_k$ defined in~\eqref{eq:Sk}:
\begin{equation}\label{eq:SkExt}
S_k=\Ext.
\end{equation}
To prove the upper bound for $\mu(H_k)$, we use Lemma~\ref{lem:hull}. To this end, we determine the (minimum) number of induced and convex \emph{diagonals} which cover all the vertices of $H_k$. A diagonal is either \emph{degenerated} or \emph{non-degenerated}: non-degenerated diagonals are paths of $H_k$ formed by at least two vertices and having the form $((\ell_1,\ldots,\ell_k), (\ell_1+1,\ldots,\ell_k+1), (\ell_1+2,\ldots,\ell_k+2), \ldots)$, whereas each degenerated diagonal consists of a single vertex. These two kinds of diagonals are formally defined as follows (see. Figure~\ref{fig:diagonals-in-3D} for two examples):
\begin{description}
\item[$(i)$]
Each vertex in $I= \{ (\ell_1,\ldots,\ell_k):\  \exists~ i\in [k], \ell_i=1 \mbox{ and } \forall j\in [k], \ell_j\neq n(P_{n_j}) \}$ belongs to non-degenerated diagonals. In particular, each vertex in $I$ is the \emph{initial vertex} (i.e., one of its end-vertices) of such kind of diagonals.
\item[$(ii)$]
If $(\ell_1,\ldots,\ell_k)$ belongs to a non-degenerated diagonal $d$, then also its neighbor $(\ell_1+1,\ldots,\ell_k+1)$ (if it exists in $H_k$) belongs to $d$.  This property allows to define non-degenerated diagonals, along all their maximal length, till some \emph{terminating vertex} having at least one coordinate $\ell_i$ such that $\ell_i = n(P_{n_i})$. We denote by $T$ all the terminating vertices of non-degenerated diagonals.
\item[$(iii)$]
The set $D= \{ (\ell_1,\ldots,\ell_k) ~:~ \exists~ i,j\in [k], \ell_i=1 \textit{ and } \ell_j = n(P_{n_j}) \}$ contains all vertices forming degenerated diagonals.
\end{description}

\begin{figure}[ht!]
\begin{center}
\includegraphics[height=4.2cm]{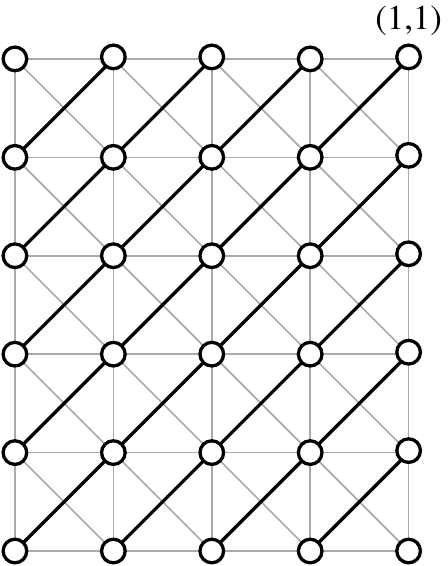}
\hspace{10mm}
\includegraphics[height=7.5cm]{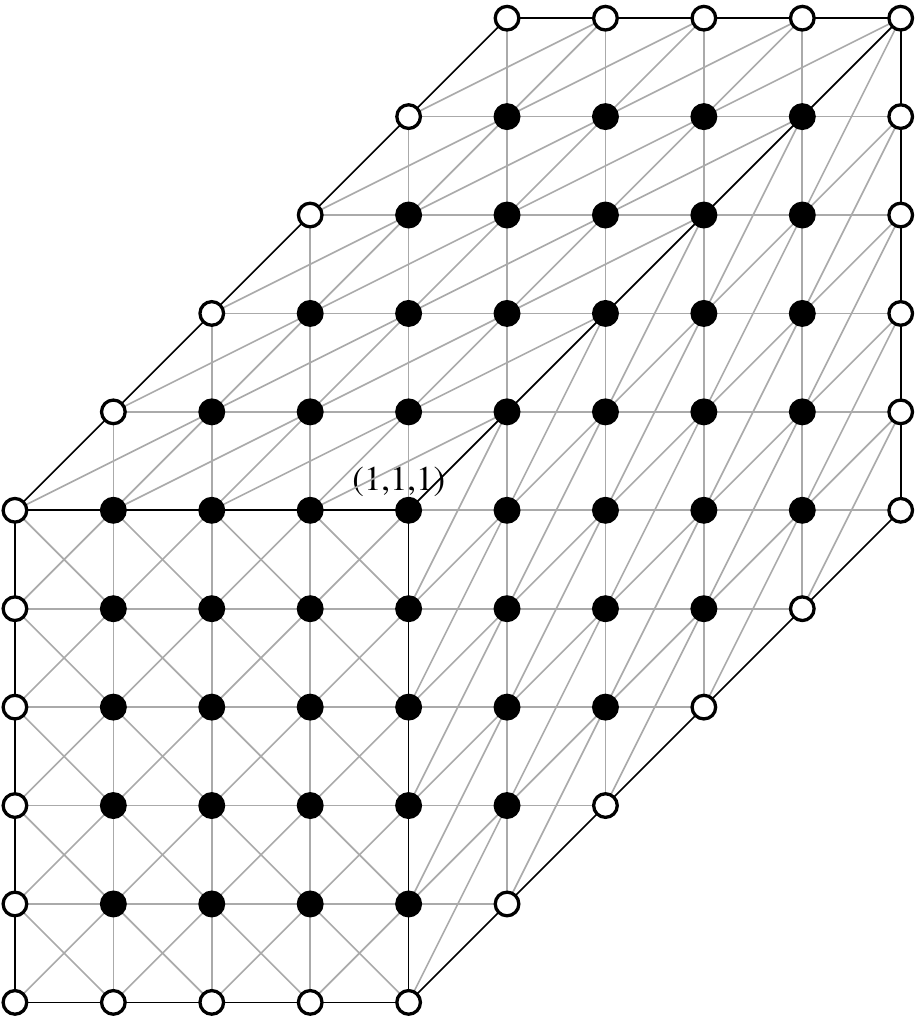}
\caption{\small
Visualization of diagonals as defined in the proof of Theorem~\ref{thm:path-product-multi}. (\emph{Left}) In this strong product $H_2 = P_5\boxtimes P_6$, the thicker and bolder lines represent non-degenerated diagonals.
(\emph{Right}) A representation of $H_3 = P_5\boxtimes P_6\boxtimes P_6$ as an ``opaque rectangular cuboid'' where the position of the vertex with coordinates $(1,1,1)$ is shown. Black vertices represent the elements of set $I$, that is the starting point of non-degenerate diagonals; white vertices represent the elements of set $D$, that is vertices forming degenerated diagonal. All such diagonals cover the whole graph $H_3$. }
\label{fig:diagonals-in-3D}
\end{center}
\end{figure}

\noindent
Notice that the non-degenerated diagonals are pairwise vertex disjoint. The requested covering of $H_k$ is given by all the maximal non-degenerated diagonals along with all the degenerated diagonals. Now, let $X\subseteq V(H_k)$ be the set containing the end-vertices of each non-degenerated diagonal and all the vertices forming degenerated diagonals; formally, $X=I \cup T \cup D$. According to Lemma~\ref{lem:hull} we know that $\mu(H_k)\le |X|$. By Eq.~\ref{eq:SkExt}, we complete the proof by showing that $\Ext = X$.

\begin{itemize}
\item
Let $v=(\ell_1,,\ldots,\ell_k)\in \Ext$. By definition of $\Ext$, there exists a coordinate $\ell_i$ of $v$ for which $\ell_i = 1$ or $\ell_i= n(P_{n_i})$.
If both $\ell_i = 1$ and $\ell_i= n(P_{n_i})$ hold, then property $(iii)$ in the definition of diagonals holds. This means that $v\in D$ and hence $v\in X$.
If $\ell_i = 1$ and $\ell_i\neq n(P_{n_i})$ hold, then property $(i)$ in the definition of diagonals holds. This means that $v\in I$ and hence $v\in X$.
If $\ell_i \neq 1$ and $\ell_i= n(P_{n_i})$ hold for each $i$, then consider the smallest coordinate $\ell_j$ of $v$. According to  property $(ii)$, the vertex $v'= (\ell_1-(\ell_j-1),\ldots,\ell_k-(\ell_j-1))$ lies in the set $I$ from which a non-degenerated diagonal starts. This implies that $v\in T$ and hence $v\in X$.
So, in all cases, we have $v\in X$.
\item
Let $v=(\ell_1,\ldots,\ell_k)\in X$. If $v\in I\cup D$, then $v\in \Ext$ trivially holds. Assume now $v\in T$, that is $v$ is the end-vertex of a non-degenerated diagonal $d$ starting at some vertex $v'=(\ell_1',\ldots,\ell_k')$ for which $(i)$ holds, and made maximal by iteratively applying property $(ii)$. According to $(ii)$, an end-vertex of $d$ must be in $\{ n(P_{n_1}), \ldots, n(P_{n_k})\}$, and hence $v\in \Ext$.
\end{itemize}
This proves that $\Ext = X$ holds.
\qed
\end{proof}

It seems worth pointing out that the result of Theorem~\ref{thm:path-product-multi} for two- and three-dimensional strong grids reads as follows:
\begin{align*}
\mu(P_{n_1}\boxtimes P_{n_2}) & = 2n_1 + 2n_2 - 4\,,\\
\mu(P_{n_1}\boxtimes P_{n_2}\boxtimes P_{n_3}) & = 2(n_1n_2 + n_1n_3 + n_2n_3) - 4(n_1 + n_2 + n_3) + 8\,.
\end{align*}

To conclude the analysis, notice there are examples of graphs for which the bound of Theorem~\ref{thm:mut-lb} is not sharp. An example of this situation is given in Figure~\ref{fig:no_tree_solution}.

\begin{figure}[ht!]
\begin{center}
\includegraphics[height=7.5cm]{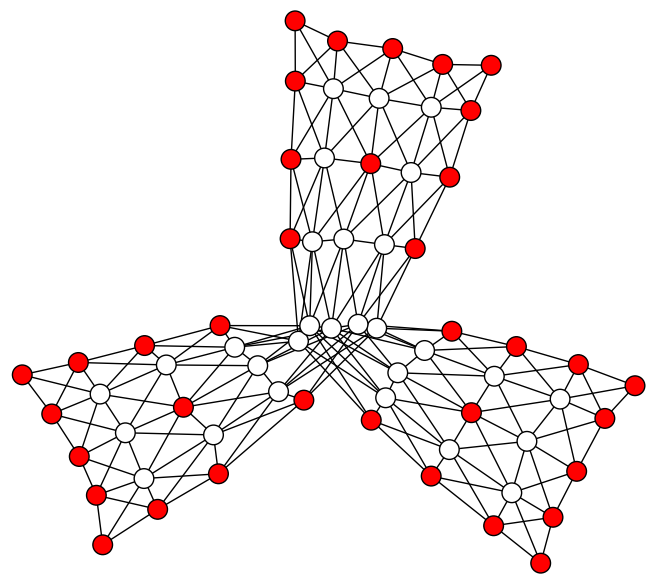}
\caption{\small The graph $T\boxtimes P_5$ and its mutual-visibility set of cardinality $36$.}
\label{fig:no_tree_solution}
\end{center}
\end{figure}

Let $T$ be the tree obtained from $K_{1,3}$ by subdividing each of its edges three times. Since both $T$ and $P_5$ admit feasible $\mut$-sets, 
Theorem~\ref{thm:mut-lb} implies $\mu(T\boxtimes P_5)\ge \mu(P_5)n(T) + \mu(T)n(P_5) - \mu(P_5)\mu(T) = 35$, but in Figure~\ref{fig:no_tree_solution} we can see a mutual-visibility set of cardinality $36$ found by computer search. This example also shows that even when both factors of a strong product are $(\mu, \mut)$-graphs, their strong product does not achieve the equality in the bound of Theorem~\ref{thm:mut-lb}. Note that this particular example can be generalized to an infinite family of graphs where the difference between the mutual-visibility number and the bound of the theorem becomes arbitrarily large. This situation also suggests that generalizing Theorem~\ref{thm:path-product-multi} (when $k=2$) to the strong product of two arbitrary trees might be a challenging problem.

\begin{corollary}\label{cor:universal}
If $G_1, \ldots, G_k$ are non-complete graphs, each containing a universal vertex, then
$$\mut(G_1\boxtimes \cdots \boxtimes G_k) =  \prod_{i=1}^k n(G_i) - 1\,.$$
\end{corollary}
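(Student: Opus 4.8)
The plan is to sandwich $\mut(H_k)$ between a lower bound supplied by Corollary~\ref{cor:mut-lb-recursive} and a matching upper bound coming from the elementary observation that the whole vertex set of a graph is a total mutual-visibility set only when the graph is complete. Write $H_k = G_1\boxtimes\cdots\boxtimes G_k$ and $n_i = n(G_i)$.

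For the lower bound, I first verify that each factor meets the hypotheses of Corollary~\ref{cor:mut-lb-recursive}. Each $G_i$ is non-complete by assumption, so it remains to produce a feasible $\mut(G_i)$-set. Let $u_i$ be a universal vertex of $G_i$ and set $X_i = V(G_i)\setminus\{u_i\}$. Since $u_i$ is universal, $\diam(G_i)\le 2$, so any two vertices $x,y$ are joined either by an edge or by the length-$2$ path $x,u_i,y$ whose only internal vertex $u_i$ lies outside $X_i$; hence $X_i$ is a total mutual-visibility set, and by Proposition~\ref{prop:mu-perfect} it has the maximum size $n_i-1$, i.e.\ $\mut(G_i)=n_i-1$. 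It is feasible because the unique vertex missing from $X_i$ is $u_i$, which is adjacent to every pair of vertices of $X_i$ and thus serves as the required common neighbour $z\notin X_i$. Feeding $\mut(G_i)=n_i-1$, equivalently $n_i-\mut(G_i)=1$, into Corollary~\ref{cor:mut-lb-recursive} gives
$$\mut(H_k)\ \ge\ \prod_{i=1}^{k} n_i - \prod_{i=1}^{k}\bigl(n_i-\mut(G_i)\bigr) = \prod_{i=1}^{k} n_i - 1 .$$

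For the upper bound, I use $\mut(H_k)\le \mu(H_k)\le n(H_k)=\prod_{i=1}^k n_i$ together with the fact that $\mut(G)=n(G)$ forces $G$ to be complete: if $G$ is not complete, pick $x,y$ with $d_G(x,y)\ge 2$, and then every shortest $x,y$-path has an internal vertex lying in $V(G)$, so $V(G)$ fails to be a total mutual-visibility set. Thus it suffices to show $H_k$ is not complete. Taking $g,g'\in V(G_1)$ with $d_{G_1}(g,g')\ge 2$ (possible since $G_1$ is non-complete) and any fixed coordinates $h_2,\dots,h_k$ in the remaining factors, Proposition~\ref{prop:sp-distance} yields $d_{H_k}\bigl((g,h_2,\dots,h_k),(g',h_2,\dots,h_k)\bigr)=\max\{d_{G_1}(g,g'),0\}\ge 2$, so $H_k$ has two vertices at distance at least two and is not complete. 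Consequently $\mut(H_k)\le \prod_{i=1}^{k} n_i - 1$, and combining with the lower bound closes the argument.

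There is no deep obstacle here: the result is essentially a clean specialisation of the recursive lower bound, the only points requiring care being the explicit verification that the complement of a universal vertex is a \emph{feasible} $\mut$-set (so that Corollary~\ref{cor:mut-lb-recursive} applies) and the completeness characterisation that converts the trivial bound $\mut(H_k)\le n(H_k)$ into the sharp bound $\prod n_i - 1$. Both are short; the rest is the arithmetic $\prod(n_i-\mut(G_i))=\prod 1=1$.
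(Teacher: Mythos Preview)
Your proof is correct and follows essentially the same route as the paper: both obtain the lower bound by noting that $V(G_i)\setminus\{u_i\}$ is a feasible $\mut$-set of size $n_i-1$ and applying Corollary~\ref{cor:mut-lb-recursive}, and both obtain the upper bound from the fact that the product is not complete. You simply spell out in more detail the feasibility check and the non-completeness of $H_k$, which the paper leaves implicit.
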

\begin{proof}
Observe that if $v$ is a universal vertex of $G_i$, then $V(G_i)\setminus \{v\}$ is a feasible $\mut$-set of $G_i$. Hence, since each $G_i$ is a $(\mu, \mut)$-graph that admits a feasible $\mut$-set, by Corollary~\ref{cor:mut-lb-recursive} we get $\mut(G_1\boxtimes \cdots \boxtimes G_k) \ge \prod_{i=1}^k n(G_i) - 1$. The claim follows by simply observing that $G_1\boxtimes \cdots \boxtimes G_k$ is not a clique and hence $\mu(G_1\boxtimes \cdots \boxtimes G_k) < \prod_{i=1}^k n(G_i)$.
\qed
\end{proof}

We conclude the section with another lower bound on $\mu(G\boxtimes H)$ in terms of the mutual-visibility number of the factors.

\begin{theorem}
\label{thm:mu-product}
If $G$ and $H$ are graphs, then
$$\mu(G\boxtimes H) \ge \mu(G)\mu(H)\,.$$
\end{theorem}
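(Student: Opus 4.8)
The plan is to exhibit an explicit mutual-visibility set of $G\boxtimes H$ of size exactly $\mu(G)\mu(H)$. Let $A$ be a $\mu$-set of $G$ and $B$ a $\mu$-set of $H$, so $|A| = \mu(G)$ and $|B| = \mu(H)$, and set $S = A\times B$. Since $|S| = |A|\cdot|B| = \mu(G)\mu(H)$, it suffices to prove that $S$ is a mutual-visibility set of $G\boxtimes H$; the inequality then follows immediately from the definition of $\mu$. The whole argument thus reduces to verifying visibility for an arbitrary pair of distinct vertices $(g,h),(g',h')\in S$.

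The construction of the required geodesic mirrors the distance-case analysis already used in the proof of Theorem~\ref{thm:mut-lb}, and leans on Proposition~\ref{prop:sp-distance}. Since $g,g'\in A$ and $A$ is a mutual-visibility set, whenever $g\neq g'$ there is a shortest $g,g'$-path $P_G\colon g=g_0,g_1,\ldots,g_k=g'$ whose internal vertices avoid $A$; symmetrically one obtains $P_H\colon h=h_0,\ldots,h_\ell=h'$ avoiding $B$ internally when $h\neq h'$. If $g=g'$ (so $h\neq h'$), the $H$-layer path $(g,h_0),(g,h_1),\ldots,(g,h_\ell)$ is a geodesic of length $\ell = d_{G\boxtimes H}((g,h),(g',h'))$, and each internal vertex $(g,h_i)$ lies outside $S$ because $h_i\notin B$; the case $h=h'$ is symmetric. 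In the main case $g\neq g'$ and $h\neq h'$, assume without loss of generality $\ell\le k$ and take the ``diagonal-then-straight'' path
$$Q\colon\ (g_0,h_0),(g_1,h_1),\ldots,(g_\ell,h_\ell),(g_{\ell+1},h_\ell),\ldots,(g_k,h_\ell).$$
This has length $\ell + (k-\ell) = k = \max\{k,\ell\}$, so by Proposition~\ref{prop:sp-distance} it is a geodesic in $G\boxtimes H$ joining $(g,h)$ to $(g',h')$.

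It remains to check that no internal vertex of $Q$ lies in $S=A\times B$, and this is the only place requiring care. A vertex belongs to $A\times B$ only if \emph{both} of its coordinates lie in $A$ and $B$ respectively, so it suffices to exhibit one coordinate that is an internal vertex of $P_G$ or $P_H$. For the diagonal portion the vertices $(g_i,h_i)$ with $1\le i\le \ell-1$ have $g_i\notin A$, and for the straight portion the vertices $(g_i,h_\ell)$ with $\ell\le i\le k-1$ again have $g_i\notin A$, since in every such case the index $i$ satisfies $0<i<k$ and hence $g_i$ is internal to $P_G$. The one point deserving explicit comment is the ``turning'' vertex $(g_\ell,h_\ell)$ when $\ell<k$: here $h_\ell=h'\in B$, but $g_\ell\notin A$ because $0<\ell<k$, so it is safe; when $\ell=k$ this vertex is the endpoint $(g',h')$ and not internal. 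Thus $S$ is a mutual-visibility set and $\mu(G\boxtimes H)\ge |S| = \mu(G)\mu(H)$. I expect no serious obstacle beyond this bookkeeping at the turning vertex, since the geodesic structure of strong products handed to us by Proposition~\ref{prop:sp-distance} makes the shortest-path verification automatic.
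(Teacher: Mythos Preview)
Your proof is correct and follows essentially the same approach as the paper: take the product of $\mu$-sets in the factors and connect any two of its vertices by the diagonal-then-straight geodesic built from $X$-avoiding shortest paths in $G$ and $H$. The only difference is cosmetic: the paper handles all cases (including $g=g'$ or $h=h'$) uniformly by allowing $k=0$ or $\ell=0$ and then dismissing the internal-vertex check with a single ``clearly'', whereas you split off the layer cases and spell out the verification at the turning vertex; your extra bookkeeping is sound but not strictly needed, since in every internal position the first coordinate $g_i$ already lies strictly between the endpoints of $P_G$.
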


\proof
Let $S_G$ be a $\mu$-set of $G$ and $S_H$ be a $\mu$-set of $H$. Then we claim that $S = S_G \times S_H$ is a mutual-visibility set of $G\boxtimes H$.

Let $(g,h)$ and $(g',h')$ be arbitrary two vertices from $S$. Since $g,g'\in S_G$, there exists a shortest $g,g'$-path $P_G$ in $G$ such that no internal vertex of $P_G$ is in $S_G$. Let the consecutive vertices of $P_G$ be $g=g_0, g_1, \ldots, g_k=g'$. Similarly, there is a shortest $h,h'$-path $P_H$ in $H$ such that no internal vertex of $P_H$ is in $S_H$. Let the consecutive vertices of $P_H$ be $h=h_0, h_1, \ldots, h_\ell=h'$. Note that it is possible that $k=0$ or $\ell = 0$ (but not both). Assume without loss of generality that $\ell\le k$. Then the vertices
$$(g,h) = (g_0,h_0), (g_1,h_1), \ldots, (g_\ell, h_\ell), (g_{\ell+1}, h_\ell)), \ldots, (g_k, h_\ell)) = (g',h')$$
induce a shortest $(g,h),(g',h')$-path $Q$ in $G\boxtimes H$. Clearly, no internal vertex of $Q$ is in $S$, hence we conclude that $S$ is a mutual-visibility set.
\qed

\section{Mutual-visibility in strong prisms}\label{sec:strong-prisms}

In this section we study the mutual-visibility number of strong prisms, that is graphs in the form $G\boxtimes P_2$. We begin with the following general lower bound.

\begin{theorem}
\label{thm:P2-lower}
If $G$ is a graph, then $\mu(G\boxtimes P_2) \ge \max\{n(G), 2\mu(G)\}$.
\end{theorem}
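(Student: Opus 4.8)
The plan is to establish the two lower bounds $\mu(G \boxtimes P_2) \ge n(G)$ and $\mu(G \boxtimes P_2) \ge 2\mu(G)$ separately, each by exhibiting an explicit mutual-visibility set of the required size. Throughout I would picture $G \boxtimes P_2$ as two copies of $G$, the layers $V(G) \times \{1\}$ and $V(G) \times \{2\}$, and I would repeatedly use Proposition~\ref{prop:sp-distance}, which gives $d_{G \boxtimes P_2}((g,i),(g',j)) = \max\{d_G(g,g'), d_{P_2}(i,j)\}$; since $d_{P_2}(i,j) \le 1$, every distance in the prism collapses to $d_G(g,g')$ whenever $g \ne g'$. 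The crucial structural fact I would lean on is that a shortest $g,g'$-path $g = g_0, g_1, \ldots, g_k = g'$ in $G$ can be lifted to a shortest path in the prism with the intermediate vertices placed in whichever layer is convenient, because each edge $g_i g_{i+1}$ lets us stay in a layer or cross between layers at cost $1$.

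For the bound $\mu(G \boxtimes P_2) \ge n(G)$, I would take a single layer, say $X_1 = V(G) \times \{1\}$, and argue it is a mutual-visibility set. Given distinct $(g,1),(g',1) \in X_1$ with $d_G(g,g') = k$, if $k = 1$ they are adjacent; otherwise I route through the opposite layer via $(g,1), (g_1,2), \ldots, (g_{k-1},2), (g',1)$, a path of length $k$ (hence shortest by Proposition~\ref{prop:sp-distance}) whose internal vertices all lie in layer $2$ and so avoid $X_1$. This shows the full layer has mutual visibility and gives the first bound.

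For the bound $\mu(G \boxtimes P_2) \ge 2\mu(G)$, I would fix a $\mu$-set $M$ of $G$ and set $X = M \times \{1,2\}$, which has cardinality $2\mu(G)$. I would verify mutual visibility by a short case analysis on a pair of distinct vertices of $X$ according to their layers. When both lie in the same layer, the routing through the other layer from the first bound applies, except that now the path $g = g_0, \ldots, g_k = g'$ is chosen to be a shortest path avoiding $M$ internally (possible since $M$ is a mutual-visibility set of $G$); the lifted internal vertices then have first coordinate outside $M$ and hence avoid $X$. When the two vertices lie in different layers, say $(g,1)$ and $(g',2)$ with $g, g' \in M$, they are adjacent if $g = g'$ or $d_G(g,g') = 1$, and otherwise I lift a shortest $M$-avoiding $g,g'$-path keeping intermediate vertices in layer $1$ and crossing to layer $2$ only on the final edge; again every internal vertex has first coordinate outside $M$, so it misses $X$.

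The two constructions are independent and neither is hard; the only point requiring care is to make sure each lifted path has length exactly $d_G(g,g')$ so that it is genuinely a shortest path in the prism, which is precisely guaranteed by Proposition~\ref{prop:sp-distance}. Combining the two bounds yields $\mu(G \boxtimes P_2) \ge \max\{n(G), 2\mu(G)\}$. I do not anticipate a serious obstacle; if anything, the subtle point is observing that placing all intermediate vertices of a lifted path outside $M$ in the first coordinate automatically keeps them out of $X = M \times \{1,2\}$ regardless of the layer chosen, which is what makes the mixed-layer case go through cleanly.
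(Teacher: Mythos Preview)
Your proof is correct and follows essentially the same approach as the paper. For the bound $\mu(G\boxtimes P_2)\ge n(G)$ the paper uses exactly your layer construction and layer-switching path; for the bound $\mu(G\boxtimes P_2)\ge 2\mu(G)$ the paper simply invokes Theorem~\ref{thm:mu-product} (which gives $\mu(G\boxtimes H)\ge \mu(G)\mu(H)$) with $\mu(P_2)=2$, whereas you unroll that argument by hand for the set $M\times\{1,2\}$---the underlying construction and verification are the same.
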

\begin{proof} 
%
Since $\mu(P_2)=2$, by Theorem~\ref{thm:mu-product} we get $\mu(G\boxtimes P_2) \ge 2\mu(G)$. Assuming $V(P_2)=\{p,q\}$, we prove the statement by showing that $S = V(G) \times \{p\}$ is a mutual-visibility set of $G\boxtimes P_2$.

Let $(g,p)$ and $(g',p)$, with $g\neq g'$, be arbitrary two distinct vertices from $S$. Consider a shortest $g,g'$-path $P_G$ in $G$. Let the consecutive vertices of $P_G$ be $g=g_0, g_1, \ldots, g_k=g'$. Since $g\neq g'$ we get $k\ge 1$. If $k=1$, then $(g,p)$ and $(g',p)$ are connected and there is nothing to prove.  If $k\ge 2$, then the vertices
$$(g,p) = (g_0,p), (g_1,q), \ldots, (g_{k-1},q), (g_k, p) = (g', p) $$
induce a shortest $(g,p),(g',p)$-path $Q$ in $G\boxtimes P_2$. Clearly, no internal vertex of $Q$ is in $S$, hence we conclude that $S$ is a mutual-visibility set of $G\boxtimes P_2$.
%
\qed
\end{proof}

Theorem~\ref{thm:P2-lower} can be improved for $(\mu, \mut)$-graphs as we next show.

\begin{theorem}\label{thm:P2-G-lower}
If $G$ is a  $(\mu, \mut)$-graph that admits a feasible $\mut$-set, then $\mu(G\boxtimes P_2) \ge \mu(G) + n(G)$.
\end{theorem}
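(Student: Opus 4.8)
The plan is to exhibit an explicit mutual-visibility set of $G\boxtimes P_2$ of size $\mu(G)+n(G)$. Write $V(P_2)=\{p,q\}$ and let $S_G$ be a feasible $\mut$-set of $G$, so that $|S_G|=\mut(G)=\mu(G)$ by the $(\mu,\mut)$-hypothesis. I would take
$$S = \bigl(V(G)\times\{p\}\bigr)\cup\bigl(S_G\times\{q\}\bigr).$$
The two parts are disjoint since their $P_2$-coordinates differ, hence $|S|=n(G)+|S_G|=n(G)+\mu(G)$, which is exactly the claimed bound; it remains to verify that $S$ is a mutual-visibility set.

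The key structural observation is that the whole layer $V(G)\times\{p\}$ lies in $S$, so no geodesic used to certify visibility may have an internal vertex in that layer. The remedy is to route every geodesic through the $q$-layer, exploiting that $S_G$ is a total mutual-visibility set of $G$: for any pair $g,g'\in V(G)$ there is a shortest $g,g'$-path $g=g_0,\ldots,g_k=g'$ in $G$ whose internal vertices avoid $S_G$, so that the corresponding vertices $(g_i,q)$ with $1\le i\le k-1$ avoid $S$ entirely. By Proposition~\ref{prop:sp-distance} one has $d_{G\boxtimes P_2}((g,a),(g',a'))=d_G(g,g')$ whenever $g\neq g'$, and this is what guarantees that paths obtained by one diagonal step followed by a traversal inside the $q$-layer remain geodesics.

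Concretely, I would split into three cases according to the $P_2$-coordinates of the chosen pair (the cases $k\le 1$ always reduce to adjacency and need no argument). For two vertices $(g,p),(g',p)$ in the saturated layer, I would use $(g_0,p),(g_1,q),\ldots,(g_{k-1},q),(g_k,p)$, which begins and ends with a diagonal step and keeps all interior vertices in the $q$-layer off $S_G$. For two vertices $(g,q),(g',q)$ with $g,g'\in S_G$, I would simply stay inside the $q$-layer along $(g_0,q),\ldots,(g_k,q)$: this is a geodesic of length $d_G(g,g')$ and its interior again misses $S_G\times\{q\}$. For a mixed pair $(g,p),(g',q)$ with $g'\in S_G$, if $g=g'$ or $gg'\in E(G)$ the two vertices are adjacent, while otherwise the path $(g_0,p),(g_1,q),\ldots,(g_{k-1},q),(g_k,q)$ makes the single layer transition at the first step and then remains in the $q$-layer.

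The main obstacle, and the feature that dictates the entire construction, is precisely that the fully occupied $p$-layer forbids interior vertices there; once one commits to routing through the $q$-layer, the total mutual-visibility property of $S_G$ together with the $\max$-formula for strong-product distances makes every case routine. I note in passing that the verification uses only that $S_G$ is a $\mut$-set of size $\mu(G)$: feasibility of $S_G$ is not actually invoked in checking visibility, so the hypothesis is comfortably sufficient.
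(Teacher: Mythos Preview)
Your proof is correct. The set $S$ you construct is exactly the set produced by the paper's Theorem~\ref{thm:mut-lb} when one takes $S_H=\{p\}$ in $P_2$ (since $(V(G)\times V(P_2))\setminus(\overline{S_G}\times\{q\})=(V(G)\times\{p\})\cup(S_G\times\{q\})$), and the paper's own proof simply invokes that theorem rather than re-verifying visibility. Your direct case analysis is essentially the specialization of the proof of Theorem~\ref{thm:mut-lb} to $H=P_2$, made shorter by the fact that $P_2$ has diameter~$1$. The one genuine dividend of your explicit route is your closing remark: because $d_H\le 1$ always, the cases of Theorem~\ref{thm:mut-lb} that require feasibility of $S_G$ never arise, so the conclusion actually holds for any $(\mu,\mut)$-graph, feasible $\mut$-set or not---an observation the paper's proof via the general theorem does not surface.
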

\begin{proof}
Consider a feasible $\mut$-set (which is also a $\mu$-set) of $G$ and a total mutual-visibility set $X$ for $P_2$ composed by only one vertex. If $G$ is trivial, then the statement clearly holds, otherwise we can apply the first inequality of Theorem~\ref{thm:mut-lb} and~\eqref{eq:mut-lb-perfect} as follows:
$$
   \begin{array}{rcl}
   \mu(G\boxtimes P_2) & \ge &  \mu(G)n(P_2)   + |X|n(G) -  \mu(G)|X| \\
                      & = & \mu(G)\cdot 2   + n(G) -  \mu(G) \\
                      & = & \mu(G)   + n(G).
   \end{array}$$ \qed
\end{proof}

Next we show that the lower bound of Theorem~\ref{thm:P2-G-lower} is attained by block graphs (in particular non-complete block graphs, which admit feasible $\mut$-sets). To do so, we need the following lemma.

\begin{lemma}
\label{lem:cut}
Let $x$ be a cut vertex of a graph $G$. Then there exists a $\mu$-set of $G\boxtimes P_2$ which contains at most one copy of $x$ in the two $G$-layers.
\end{lemma}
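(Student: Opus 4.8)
The plan is to start from an arbitrary $\mu$-set $S$ of $G\boxtimes P_2$ (writing $V(P_2)=\{p,q\}$, so the two copies of $x$ are $(x,p)$ and $(x,q)$) and to show that whenever $S$ contains both copies, we can exchange one of them for a vertex lying in another branch of the cut, producing another $\mu$-set that uses only one copy of $x$. If $S$ already contains at most one copy of $x$ there is nothing to do, so the interesting case is $(x,p),(x,q)\in S$.

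The first step is a structural observation. Let $C_1,\dots,C_r$ (with $r\ge 2$) be the components of $G-x$. I claim that, apart from $(x,p)$ and $(x,q)$, every vertex of $S$ has its $G$-coordinate in a single component, say $C_1$. Indeed, if $(a,\cdot),(b,\cdot)\in S$ with $a\in C_i$, $b\in C_j$ and $i\ne j$, then since $x$ is a cut vertex every $a,b$-path in $G$ meets $x$ as an internal vertex; by Proposition~\ref{prop:sp-distance} the $G\boxtimes P_2$-distance between these vertices equals $d_G(a,b)\ge 2$, so the $G$-projection of any geodesic joining them is a shortest $a,b$-path and hence passes through $x$ internally. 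Thus every such geodesic has an internal vertex in $\{(x,p),(x,q)\}\subseteq S$, contradicting the $S$-visibility of $(a,\cdot)$ and $(b,\cdot)$. Consequently $S=S_1\cup\{(x,p),(x,q)\}$ with $S_1\subseteq C_1\times\{p,q\}$.

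Next I would perform the exchange. I pick a neighbour $y$ of $x$ in another component, say $y\in C_2$ (such a neighbour exists because $x$ separates $C_2$ from the rest), and set $S'=(S\setminus\{(x,q)\})\cup\{(y,q)\}$. Since $y\notin C_1$ the vertex $(y,q)$ is genuinely new, so $|S'|=|S|$ and $S'$ contains $(x,p)$ but not $(x,q)$. It remains to verify that $S'$ is a mutual-visibility set; this makes $S'$ the desired $\mu$-set. Pairs inside $S'\setminus\{(y,q)\}=S_1\cup\{(x,p)\}$ are handled by reusing the geodesics witnessing their $S$-visibility: these have all their $G$-coordinates in the convex set $C_1\cup\{x\}$, hence never reach $y$, and they avoid $S\setminus\{(x,q)\}\supseteq S'\setminus\{(y,q)\}$ internally. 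The pair $(y,q),(x,p)$ is adjacent and hence visible.

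The crux is to show that $(y,q)$ is visible with every $(a,h)\in S_1$, and here I would exploit precisely the vertex we discarded. Because $(a,h)\in S_1$ lies in $C_1$, every $y,a$-path passes through $x$, so $d_G(y,a)=1+d_G(x,a)$. Now $(x,q)$ and $(a,h)$ both belong to the original $\mu$-set $S$, hence there is a shortest $(x,q),(a,h)$-path $Q$ with no internal vertex in $S$; prepending the edge $(y,q)(x,q)$ yields a shortest $(y,q),(a,h)$-path. Its internal vertices are $(x,q)$, which no longer lies in $S'$, together with the internal vertices of $Q$, which avoid $S\supseteq S_1\cup\{(x,p)\}$ and, having $G$-coordinates in $C_1\cup\{x\}$, also avoid $(y,q)$. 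Thus $(y,q)$ and $(a,h)$ are $S'$-visible. The main obstacle I expect is exactly this last visibility check: a naive geodesic entering the $C_1$-branch from $(y,q)$ could be blocked at a $G$-coordinate carrying \emph{both} its copies in $S_1$, and the whole point of routing through $(x,q)$ and recycling the discarded vertex's own visibility certificate $Q$ is to sidestep that difficulty.
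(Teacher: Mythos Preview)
Your argument is correct and follows the same exchange strategy as the paper: assume both copies of $x$ lie in a $\mu$-set $S$, observe that the rest of $S$ is confined to a single component of $G-x$, then swap one copy of $x$ for a vertex in an empty component. The paper states the result more tersely (allowing an arbitrary replacement vertex $z$ in the empty component and asserting without details that the new set is again a $\mu$-set), whereas you choose the convenient $z=(y,q)$ with $y$ adjacent to $x$ and carry out the visibility verification explicitly---in particular the key step of routing through the discarded vertex $(x,q)$, which is exactly the mechanism that makes the paper's bare assertion true.
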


\begin{proof}
Let $S$ be a $\mu$-set of $G\boxtimes P_2$ and suppose that $x', x''\in S$, where $x'$ and $x''$ are the copies of $x$ in the $G$-layers. Let $H$ and $H'$ be two components of  
$(G\boxtimes P_2) - \{x',x''\}$. Then $S\cap V(H) = \emptyset$ or $S\cap V(H') = \emptyset$, say $S\cap V(H) = \emptyset$, for otherwise $S$ is not even a mutual-visibility set. Now the set $S' = (S \cup \{z\})\setminus \{x'\}$, where $z$ is an arbitrary vertex of $H$, is also a $\mu$-set of $G\boxtimes P_2$.
\qed
\end{proof}

\begin{theorem}
\label{thm:P2-block-strong}
If $G$ is a block graph, then $\mu(G\boxtimes P_2) = n(G) + \mu(G)$.
\end{theorem}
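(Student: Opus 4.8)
The plan is to prove the two inequalities separately, after reformulating the target value. Writing $c$ for the number of cut vertices of $G$, \cite[Theorem 4.2]{DiStefano-2022} tells us that the set of non-cut vertices is a $\mu$-set, so $\mu(G) = n(G) - c$ and the claim becomes $\mu(G\boxtimes P_2) = 2n(G) - c$. For the lower bound I would split into two cases. If $G$ is complete then $G\boxtimes P_2 = K_{2n(G)}$, so $\mu(G\boxtimes P_2) = 2n(G) = n(G)+\mu(G)$ outright. If $G$ is non-complete, then $G$ is a $(\mu,\mut)$-graph by Proposition~\ref{prop:mu-perfect}, and its set of non-cut vertices is a feasible $\mut$-set: two adjacent non-cut vertices lie in a common block, which in a non-complete connected block graph must contain a cut vertex, and that cut vertex is a common neighbour lying outside the set. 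Hence Theorem~\ref{thm:P2-G-lower} applies and gives $\mu(G\boxtimes P_2)\ge \mu(G)+n(G)$.

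The core of the argument is the matching upper bound $\mu(G\boxtimes P_2)\le 2n(G)-c$. Denote the two layer-copies of $v\in V(G)$ by $(v,p)$ and $(v,q)$, and call a cut vertex $x$ \emph{doubled} in a set $S$ when both $(x,p),(x,q)\in S$. Starting from an arbitrary $\mu$-set of $G\boxtimes P_2$, I would repeatedly apply the replacement move from the proof of Lemma~\ref{lem:cut} to a doubled cut vertex until none remains, arriving at a $\mu$-set $S$ in which each cut vertex occupies at most one layer. A direct count then closes the proof: the $n(G)-c$ non-cut vertices supply at most $2(n(G)-c)$ elements of $S$, while the $c$ cut vertices supply at most $c$, so $|S|\le 2(n(G)-c)+c = 2n(G)-c$.

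The step requiring care, and the main obstacle, is to show this iteration is well founded, namely that each move strictly decreases the number of doubled cut vertices and never creates a new one. The move for a doubled cut vertex $x$ deletes one copy $x'$ and inserts a vertex $z$ from a component $H$ of $(G\boxtimes P_2)-\{x',x''\}$ with $S\cap V(H)=\emptyset$; here $H = C\boxtimes P_2$ for a component $C$ of $G-x$. The key observation is that both layer-copies of any vertex of $C$ lie in $H$, so, since $S\cap V(H)=\emptyset$, the partner copy of $z$ is absent from $S$ and $z$'s base vertex becomes only singly covered, creating no doubled vertex. Moreover, a cut vertex $y$ that was already singly covered cannot be re-doubled later: its copy present in $S$ forces the component of $G-x$ containing $y$ to meet $S$, so that component is not the disjoint $H$, and therefore the inserted $z$ is never a copy of $y$. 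Thus $x$ passes from doubled to singly covered while all other counts are preserved, the number of doubled cut vertices drops by exactly one per step, and the process terminates with the desired $S$. Combining the two bounds gives $\mu(G\boxtimes P_2) = n(G)+\mu(G)$.
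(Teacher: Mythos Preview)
Your proof is correct and follows essentially the same architecture as the paper's: both obtain the upper bound by passing, via Lemma~\ref{lem:cut}, to a $\mu$-set of $G\boxtimes P_2$ in which no cut vertex is doubled, and then counting. The paper simply asserts (from Lemma~\ref{lem:cut}) the existence of a $\mu$-set with at most one copy of \emph{each} cut vertex, whereas you make the iterative application of the replacement move explicit and verify that the number of doubled cut vertices strictly decreases; your treatment is the more careful of the two on this point. For the lower bound the routes diverge slightly: the paper exhibits directly the set consisting of both copies of every non-cut vertex together with one copy of every cut vertex and checks mutual visibility by routing along the unique induced path through cut vertices, while you instead invoke Theorem~\ref{thm:P2-G-lower} after verifying that the non-cut vertices form a feasible $\mut$-set (your observation that every block of a non-complete connected block graph contains a cut vertex is exactly what is needed). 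Both arguments yield the same set and the same bound; your route reuses existing machinery, the paper's is self-contained.
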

\begin{proof}
Let $X$ be the set of the cut vertices of $G$. By Lemma~\ref{lem:cut} there exists a $\mu$-set $S$ with at most one copy of each vertex in $X$. We show that $S$ includes one copy of any vertex $v$ of $G$ if $v$ is a cut vertex and two copies of $v$ otherwise. This proves the statement.

 Consider two vertices $u,v$ of the same copy $G'$ of $G$ and the shortest $u,v$-path in $G$. If $u$ and $v$ belong to the same block then they are adjacent since a block is a clique by definition. Otherwise, consider the shortest $u,v$-path in $G'$: it is unique and passes only through cut vertices of $G'$. Since for each vertex in $X$ only one copy is in $S$, there exists a shortest $u,v$-path in $G\boxtimes P_2$ without internal vertices in $S$. Then $u$ and $v$ are $X$-visible.

Assume that $u$ and $v$ do not belong to the same copy of $G$. If they belong to two copies of the same block, then they are adjacent. Otherwise, as above, there exists a shortest $u,v$-path in $G\boxtimes P_2$ passing through copies of cut vertices of $G$ and without internal vertices in $S$.
\qed
\end{proof}

We conclude the paper by demonstrating the sharpness of the bound of Theorem~\ref{thm:P2-G-lower}.

\begin{theorem}
\label{thm:P2-cycle-strong}
If $n\ge 3$, then
$$
 \mu(C_n\boxtimes P_2) =
 	\begin{cases}
  		6; &  n\in\{3,4,5\},\\
  		7; &  n=6, \\
  		n; &  n\ge 7. \\
 	\end{cases}
$$
\end{theorem}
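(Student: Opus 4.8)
The lower bounds require almost no work. Since $\mu(C_n)=3$, Theorem~\ref{thm:P2-lower} gives $\mu(C_n\boxtimes P_2)\ge\max\{n,2\mu(C_n)\}=\max\{n,6\}$, which already yields $\ge 6$ when $n\in\{3,4,5\}$ and $\ge n$ when $n\ge 7$. Only $n=6$ needs a separate lower bound: there I would exhibit an explicit set of size $7$, for instance $X=\{(0,p),(1,p),(2,p),(2,q),(4,p),(4,q),(5,p)\}$ in $C_6\boxtimes P_2$, and verify directly that all of its pairs are $X$-visible. Thus the real content lies in the matching upper bounds.

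Write $V(C_n)=\mathbb{Z}_n$ and $V(P_2)=\{p,q\}$, and for a candidate set $X$ let $c_i=|X\cap\{(i,p),(i,q)\}|\in\{0,1,2\}$. Call column $i$ \emph{full} if $c_i=2$, \emph{occupied} if $c_i\ge 1$, and \emph{empty} if $c_i=0$, and let $F$, $O$, $Z$ denote the numbers of full, occupied, and empty columns. Then $|X|=\sum_i c_i=|O|+|F|=n-Z+|F|$, so it suffices to bound $|F|$ in terms of $Z$. The tool is a visibility characterization obtained from Proposition~\ref{prop:sp-distance}: two vertices in distinct columns $i,j$ lie at product-distance $d_{C_n}(i,j)$ whenever this is at least $2$, every product-geodesic projects onto a $C_n$-geodesic, and the $P_2$-coordinates of its internal vertices may be chosen freely. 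Consequently, if $2\le d_{C_n}(i,j)<n/2$ (so the short arc is unique), vertices in columns $i$ and $j$ are $X$-visible if and only if no internal column of that arc is full; and if $d_{C_n}(i,j)=n/2$, visibility requires at least one of the two arcs to have no full internal column. In particular, if $m$ is a full column and $\ell,r$ are the nearest occupied columns on its two sides, then $d_{C_n}(\ell,m)+d_{C_n}(m,r)\ge n/2$, since otherwise $\ell$ and $r$ would sit on a unique short arc through the full column $m$ and could not be visible.

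For $n\ge 7$ I would then run a counting argument. Listing the occupied columns cyclically and letting $\gamma_1,\gamma_2,\dots$ be the numbers of empty columns in the gaps between consecutive occupied columns (so $\sum_k\gamma_k=Z$), the consequence above states that the two gaps flanking any full column sum to at least $\lceil n/2\rceil-2$, which is at least $2$ for $n\ge 7$. Summing this inequality over all full columns, and noting that each gap is flanked by at most two occupied columns and hence counted at most twice, gives $2Z\ge 2|F|$, that is $|F|\le Z$ and therefore $|X|\le n$; the degenerate situations with at most two occupied columns are trivial. Together with the lower bound this proves $\mu(C_n\boxtimes P_2)=n$ for $n\ge 7$.

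The cases $n\le 6$ are where the pure column bookkeeping no longer suffices and the antipodal condition must be brought in. For $n=3$ nothing is needed, since $C_3\boxtimes P_2=K_6$ and hence $\mu=6$. For $n=5$ (no antipodal pairs) the gap inequality reads ``sum of the two flanking gaps $\ge 1$ per full column'', giving $|F|\le 2Z$, and a short check over the possible values of $Z$ forces $|X|\le 6$. For $n=4$ and $n=6$ I would argue by contradiction: assuming $|X|$ exceeds the claimed value, the column constraints pin down the occupancy pattern up to rotation --- for $n=4$ any admissible pattern of size $\ge 7$ leaves an antipodal pair of occupied columns both of whose connecting arcs run through a full column, and for $n=6$ the only size-$8$ pattern surviving the distance-two constraints is the one with exactly the two antipodal columns empty and the remaining four full --- and in each case the surviving antipodal pair is blocked on both arcs, a contradiction. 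I expect this finite antipodal analysis for $n\in\{4,6\}$ (together with producing and verifying the explicit $7$-set for the $n=6$ lower bound) to be the main obstacle, whereas the uniform argument for $n\ge 7$ is clean once the visibility characterization --- in particular the freedom of the internal $P_2$-coordinates and the fact that every product-geodesic projects to the unique short $C_n$-arc --- has been established carefully.
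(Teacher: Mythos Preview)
Your proof is correct and takes a genuinely different route from the paper's. For the small cases $n\in\{3,4,5,6\}$, the paper simply appeals to a computer check (both for the upper bounds and for the $n=6$ lower bound), whereas you give explicit hand arguments and an explicit $7$-set for $n=6$. For $n\ge 7$, the paper's argument is shorter and more direct: it first observes that a mutual-visibility set $S$ can contain at most two ``separating pairs'' (your full columns), because three such pairs would cut the cycle into three arcs and any seventh vertex of $S$ sitting in one arc is blocked from the opposite pair; this already gives $|S|\le n+2$, and then if $|S|\ge n+1$ one shows that the two columns adjacent to a full column are occupied and mutually blocked through it. Your gap-counting argument is longer but more systematic: the single inequality $\gamma_{\mathrm{before}}+\gamma_{\mathrm{after}}\ge\lceil n/2\rceil-2$ per full column, summed and double-counted, yields $|F|\le Z$ uniformly for $n\ge 7$, and with the threshold lowered to $1$ it also drives the $n=5$ and $n=6$ analyses. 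So the paper buys brevity by splitting off the small cases entirely and delegating them to the machine, while your approach buys self-containment and a unified framework at the cost of more bookkeeping. One small notational point: where you write $d_{C_n}(\ell,m)+d_{C_n}(m,r)\ge n/2$, you really mean the one-sided arc distances (equivalently $\gamma_{\mathrm{before}}+\gamma_{\mathrm{after}}+2$), not the cycle metric; your subsequent gap formulation makes the intended meaning clear, so this is only a cosmetic imprecision.
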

\begin{proof}
Recall from~\cite{DiStefano-2022} that $\mu(C_n)= 3$, $n\geq 3$. Hence, Theorem~\ref{thm:P2-G-lower} implies that $\mu(C_n\boxtimes P_2) \ge 6$ when $n\le 6$ and $\mu(C_n\boxtimes P_2) \ge n$ for $n\ge 6$.

We checked by computer that $\mu(C_n\boxtimes P_2) = 6$ when $n\leq 5$ and $\mu(C_6\boxtimes P_2) = 7$.
 Assume in the rest that $n > 6$ which means that $\mu(C_n\boxtimes P_2) \ge n$. Let $S$ be a $\mu$-set of $C_n\boxtimes K_2$. We need to show that $|S| \le n$.

Let $v_0,v_1,\ldots, v_{n-1}$ and $v'_0,v'_1,\ldots, v'_{n-1}$ be the vertices of the two $C_n$-layers. Then a pair $v_i, v'_i$ is called a \emph{separating pair}. $S$ cannot contain three separating pairs since $|S| \ge n\ge 7$ for otherwise a vertex from $S$ which is not in three fixed separating pairs cannot be in visibility with all the vertices in the separating pairs. Hence $|S| \leq n+2$. If $|S| \in \{n+1, n+2\}$, then consider one separating pair $v_j, v'_j$. Then there exists a vertex in $\{v_{j-1}, v'_{j-1}\} \cap S$ and a vertex in $\{v_{j+1}, v'_{j+1}\} \cap S$ which are not $S$-visible. We conclude that $|S| \le n$.
\qed
\end{proof}

\section{Concluding remarks and future work}\label{sec:conclusion}

This work suggests some further research directions.
We have shown that block graphs and certain cographs are all $(\mu, \mut)$-graphs. Notice that cographs can be generated by using true and false twins, and that block graphs can be generated by using true twins and pendant vertices. A superclass of both cographs and block graphs is that formed by \emph{distance-hereditary graphs}. In fact, these graphs can be generated by using true twins, false twins, and pendant vertices. It would be interesting to characterize all the distance-hereditary graphs that are $(\mu, \mut)$-graphs. We left open the general question about characterizing the larger class $\mathcal{G}$ of graphs formed by $(\mu, \mut)$-graphs. In addition, another characterization that would be of interest concerns finding all graphs $G$ for which $\mut(G)=0$.

Concerning specific results, in view of Theorem~\ref{thm:path-product-multi} (when we consider $k=2$), it would be interesting to study $\mu(T\boxtimes T')$ for any two trees $T$ and $T'$. Also, Theorem~\ref{thm:P2-G-lower} provides the lower bound  $\mu(G\boxtimes P_2) \ge \mu(G) + n(G)$ for each $(\mu, \mut)$-graph $G$, whereas Theorem~\ref{thm:P2-block-strong} states that the equality is attained in the case of block graphs. We wonder if this equality holds for each $(\mu, \mut)$-graph.

Another interesting point is studying other possible variations of the general concept of mutual-visibility sets and their relationships, as well as relationships with the concept of general position sets.

It took a long time before we were able to produce a revised version of the present paper. As we finalise it, we would just like to add that in the meantime, the concept of total mutual-visibility has already given rise to several further studies, largely inspired by the original concluding remarks above.

\section*{Acknowledgments}

We are very grateful to the reviewers who identified the error in the previous version of Theorem 4.1 and also for correcting one of the cases in Theorem 5.5.

\smallskip
S. Cicerone and G. Di Stefano were partially supported by the European Union - NextGenerationEU
under the Italian Ministry of University and Research (MUR) National Innovation Ecosystem grant ECS00000041 - VITALITY - CUP J97G22000170005, and by the Italian National Group for Scientific Computation (GNCS-INdAM).
S. Klav\v{z}ar was partially supported by the Slovenian Research Agency (ARRS) under the grants P1-0297, N1-0218, N1-0285. I. G. Yero has been partially supported by the Spanish Ministry of Science and Innovation through the grant PID2019-105824GB-I00. Moreover, this investigation was partially developed while I. G. Yero was visiting the University of Ljubljana supported by  ``Ministerio de Educaci\'on, Cultura y Deporte'', Spain, under the ``Jos\'e Castillejo'' program for young researchers (reference number: CAS21/00100).



\begin{thebibliography}{99}
	
\bibitem{aljohani-2018a}
A.~Aljohani, G.~Sharma,
Complete visibility for mobile robots with lights tolerating faults,
Int.\ J.\ Netw.\ Comput.\ 8 (2018) 32--52.

\bibitem{bhagat-2020}
S.~Bhagat,
Optimum algorithm for the mutual visibility problem,
Lecture Notes Comp.\ Sci.\ 12049 (2020) 31--42.

\bibitem{Cicerone-2022+}
S.~Cicerone, G.~{Di Stefano}, S.~Klav\v{z}ar,
On the mutual-visibility in Cartesian products and in triangle-free graphs,
Appl.\ Math.\ Comput.\ 438 (2023) 127619.

\bibitem{diluna-2017}
G.A.~Di Luna, P.~Flocchini, S.G.~Chaudhuri, F.~Poloni, N.~Santoro, G.~Viglietta,
Mutual visibility by luminous robots without collisions,
Inf.\ Comput.\ 254 (2017) 392--418.

\bibitem{DiStefano-2022}
G.~{Di Stefano},
Mutual visibility in graphs,
Appl.\ Math. Comput.\ 419 (2022) 126850.

\bibitem{Hammack-2011}
R.~Hammack, W.~Imrich, S.~Klav\v{z}ar,
Handbook of Product Graphs, Second Edition,
CRC Press, Boca Raton, FL, 2011.

\bibitem{klavzar-2021}
S.~Klav\v{z}ar, B.~Patk\'{o}s, G.~Rus, I.G.~Yero,
On general position sets in Cartesian products,
Results Math.\ 76 (2021) Paper 123, 21 pp.

\bibitem{manuel-2018}
P.~Manuel, S.~Klav{\v z}ar.
A general position problem in graph theory,
Bull.\ Aust.\ Math.\ Soc.\ 98 (2018) 177--187.

\bibitem{patkos-2020}
B.~Patk{\'o}s,
On the general position problem on Kneser graphs,
Ars Math.\ Contemp.\ 18 (2020) 273--280.
	
\bibitem{tian-2021}
J.~Tian, K.~Xu,
The general position number of Cartesian products involving a factor with small diameter,
Appl.\ Math.\ Comp.\ 403 (2021) 126206.

\bibitem{poudel-2021}
P.~Poudel, A.~Aljohani, G.~Sharma,
Fault-tolerant complete visibility for asynchronous robots with lights under one-axis agreement,
Theor.\ Comput.\ Sci.\ 850 (2021) 116--134.


\bibitem{ullas-2016}
S.V.~Ullas Chandran, G.J.~Parthasarathy,
The geodesic irredundant sets in graphs,
Int.\ J.\ Math.\ Combin.\  4 (2016) 135--143.
	
\end{thebibliography}
\end{document}